\newtheorem{theo}{Theorem}[section]
\theoremstyle{definition}
\newtheorem{defi}[theo]{Definition}
\newcommand{\Z}{\mathbb{Z}}
\theoremstyle{remark}
\begin{document}

\title{Patterns and Tracks}
\author{M.J. Dunwoody }

\subjclass[2010]{20F65 ( 20E08)}
\keywords{3-manifolds}

\begin {abstract}
Patterns in triangulated $2$-spheres and $3$-spheres are investigated.  
 A reworking of
 the proof of Abigail Thompson of the Recognition Algorithm for $S^3$ is presented   together with an explanation of how it can be adapted to give a short, self-contained proof of the Poincar\'e Conjecture.

\end {abstract}

\maketitle
\begin {section} {Introduction}


 The Recognition Algorithm  of Hyam Rubinstein \cite {[R97]} 
 gives a way of deciding if  a compact triangulated  $3$-manifold $M$ is a $3$-sphere. Given such a $3$-manifold one decomposes it by cutting it up along a maximal set of non-parallel, normal, embedded, separating $2$-spheres.  This  will give a finite set of compact $3$-manifolds $M_0$ with at least one boundary component, where each such component  is a $2$-sphere.   Under the condition that $2$-spheres separate, it was  shown by Abigail Thompson in \cite {[T94]} that  each $M_0$ with more than one boundary component is a punctured $3$-ball,  and $M$ is a $3$-sphere if and only if every $M_0$ with one boundary component either contains a vertex of the triangulation or it contains an embedded  almost normal $2$-sphere. 

A proof of part of this result was given in the first version of this paper, using the language of patterns and tracks developed in \cite {[DD89]}.   This is now extended to a proof of the whole result.
It is then shown, as in \cite {[D25]}, how to adapt this proof to give a proof of the Poincar\'e 
Conjecture.  The Poincar\'e Conjecture was proved by Perelman in 2002.

In 2002 I attempted a proof of the Poincar\' e Conjecture along these lines and a number of
errors were pointed out.   At the time I was unable to resolve all of them and came to the conclusion that the approach
could not work.  Recently I wrote an account of my research [2], particularly relating to Stallings' Theorem and the accessibility of finitely generated groups and 
that made me think again about my aborted proof.   It now seems to me that the approach was a good one and I 
have come up with this proof that I think resolves the earlier problems.

Slides for a short lecture course on this material are on my home page.

I am very grateful to Peter Kropholler for his interest in my research.

\end {section}

\begin{section} {Patterns and Tracks}

  \begin{figure}[htbp]
\centering
\begin{tikzpicture}[scale=1]

  \draw  (0,.0) -- (4, 0) -- (2, 2) -- (0,0)  ;

\draw [blue](1.5,1.5) -- (2.75,1.25) ;
\draw  [blue](1.25, 1.25) -- (3,1) ;
\draw [blue](.5, 0) -- (.4, .4) ;
\draw [blue](.7, 0) --(.8, .8);
\draw [blue](1.1, 0) -- (1, 1) ;
\draw [blue](3, 0) -- (3.5, .5) ;
\draw  [above,left,,red ] (1.2,1.2) node {$5$} ;
\draw  [above,right,red] (3,1) node {$3$} ;
\draw  [below,red ] (2,0) node {$4$} ;
\end{tikzpicture}

\caption {\label 2}
\end{figure}

   Let $K$ be a finite $2$-complex with polyhedron $|K|$. A {\it pattern}
is a subset $P$ of $|K|$ satisfying the following conditions:-

\begin{itemize}

\item  [(i)] For each $2$-simplex $\sigma $ of $K$,  $P\cap |\sigma|  $ is a union of finitely many disjoint straight lines joining distinct faces of $\sigma$.
  \item [(ii)] For each $1$-simplex $\gamma$ of $K$, $P\cap |\gamma | $ consists of finitely many points in the interior of $|\gamma |$.
\end {itemize}

A {\it track} is a connected pattern.   A pattern is uniquely determined by its intersection with the $1$-skeleton. See Figure 1.

   If two patterns $P$ and $Q$ intersect each $1$-simplex in the same number of points then the patterns are said to be {\it equivalen}t.  Two equivalent disjoint tracks in the same $2$-complex are said to be {\it parallel}.

  If two patterns $P$ and $Q$ intersect each $1$-simplex in the same number of points then the patterns are said to be {\it equivalen}t.  Two equivalent disjoint tracks in the same $2$-complex are said to be {\it parallel}.
    
\end {section}
\begin {section} {Patterns in $2$-spheres}

If a pattern  in a tetrahedron $T$ is as in Figure 2 then  the tracks are all $3$-tracks or $4$-tracks. A  pattern in a $3$-manifold is called a normal pattern if the intersection with the boundary of every $3$-simplex $\rho $  is like this.  
  
  \begin{figure}[htbp]
\centering
\begin{tikzpicture}[scale=.8]

  \draw  (0,0) -- (0,4) --(4,4)-- (0,0)  ;
  \draw  (0,0) --(4,0) -- (4,4) ;

\draw [red ] (4, 1.5) --(0,1.5) ;
\draw [red ] (4, 1.7) --(0,1.7) ;
\draw [red ] (4, 1.9) --(0,1.9) ;
\draw [red ] (4, 2.1) --(0,2.1) ;
\draw [red ] (4, 2.3) --(0,2.3) ;
\draw [red ] (4, 2.5) --(0,2.5) ;

\draw   (0,0) node {$\bullet $} ;
\draw   (4,0) node {$\bullet $} ;
\draw  (0,4) node {$\bullet $} ;
\draw  (4,4) node {$\bullet $} ;
\
 \draw  (6,0)--(6,4) --(10,0)--(10,4) ; 

\draw   (6,0) node {$\bullet $} ;
\draw   (10,0) node {$\bullet $} ;
\draw  (6,4) node {$\bullet $} ;
\draw  (10,4) node {$\bullet $} ;

\draw [red ] (6, 1.5) --(10,1.5) ;
\draw [red ] (6, 1.7) --(10,1.7) ;
\draw [red ] (6, 1.9) --(10,1.9) ;
\draw [red ] (6, 2.1) --(10,2.1) ;
\draw [red ] (6, 2.3) --(10,2.3) ;
\draw [red ] (6, 2.5) --(10,2.5) ;

\draw [red] (6,2.7)--(7.3,4) ;
\draw [red] (6,3.1)--(6.9,4) ;
\draw [red] (6,3.5)--(6.5,4) ;

\draw [red] (0,2.7)--(1.3,4) ;
\draw [red] (0,3.1)--(0.9,4) ;
\draw [red] (0,3.5)--(0.5,4) ;

\draw [red] (4, 3.8) --(3.8, 4) ;
\draw [red] (4, 3.4) --(3.4, 4) ;
\draw [red] (4, 3) --(3, 4) ;
\draw [red] (4, 2.6) --(2.6,4) ;

\draw [red ] (10, 3.8) --(9.8,4) ;
\draw [red ] (10, 3.4) --(9.4,4) ;
\draw [red ] (10, 3) --(9,4) ;
\draw [red ] (10, 2.6) --(8.6,4) ;

\draw [red] (10,1.3)--(8.7,0) ;
\draw [red] (10,0.9)--(9.1,0) ;
\draw [red] (10,0.5)--(9.5,0) ;

\draw [red] (6,.2)--(6.2,0) ;
\draw [red] (6,1)--(7,0) ;

\draw [red] (6,.6)--(6.6,0) ;

\draw [red] (6,1.4)--(7.4,0) ;
\draw [red] (0,1.4)--(1.4,0) ;
\draw [red] (0,.2)--(0.2,0) ;
\draw [red] (0,1)--(1,0) ;

\draw [red] (0,.6)--(.6,0) ;

\draw [red] (4, 1.3) --(2.7,0) ;
\draw [red] (4, .9) --(3.1,0) ;
\draw [red] (4, .5) --(3.5,0) ;

\

\draw [red] (6,.2)--(6.2,0) ;
\draw [red] (6,1)--(7,0) ;

\draw [red] (6,.6)--(6.6,0) ;

\draw [red] (6,1.4)--(7.4,0) ;
\draw [red] (6,.2)--(6.2,0) ;
\draw [red] (6,1)--(7,0) ;

\draw [red] (6,.6)--(6.6,0) ;

\draw [red] (6,1.4)--(7.4,0) ;

\draw (6,0) --(10,0) ;

\draw (6,4) --(10,4) ;

\draw  [left] (0,0) node {$p$} ;
\draw  [left] (6,0) node {$p$} ;
\draw  [left] (0,4) node {$q$} ;
\draw  [left] (6,4) node {$q$} ;

\draw  [right ] (4,0) node {$s$} ;
\draw  [right ] (10,0) node {$s$} ;
\draw  [right ] (4,4) node {$r$} ;

\draw  [right ] (10,4) node {$r$} ;

\end{tikzpicture}
\caption {Normal Pattern}
\end{figure}
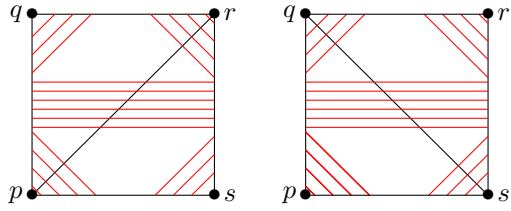
  
  
There are other patterns in a tetrahedron, as we will see later.
   
Two separating tracks are parallel unless the space between them contains a vertex or the centre point of a face.      This is used in the accessibility of finitely presented groups,

A tetrahedron is a triangulation of a $2$-sphere.   There are many triangulations of a $2$-sphere.  Another one is the icosahedron.  Let $K$ be such a $2$-complex.   Every edge ($1$-simplex) belongs to two faces
and the neighbourhood of each vertex is a disc.

Suppose $T$ is a simple closed curve in $K$ that is in general position with respect to the triangulation.   Then $T$ is a track (or rather it can be straightened to be a track) if and only if in the intersection with each $2$-simplex there are no returning arcs, i.e.
 one as in Figures 3 and 4 for which the end points are in the same edge of the triangulation. 
\

 \begin{figure}[htbp]
\centering
\begin{tikzpicture}[scale =1.5]

 \draw  (0,2)--(2,4)--(4,2)--cycle ;

\draw [thick, blue] (1,3)--(1.1,2.7)--(2.9,2.7)--(3,3) ;
\draw [thick, blue] (.8,2.8)--(1,2) ;
\draw [thick, blue] (.5,2.5)--(.6,2) ;
\draw  [thick, blue]  (2.5,2) arc (0:180: .5) ;
\draw [thick, blue] (3.2,2.8)--(3,2) ;
\draw [thick, blue] (2.3,2) arc (0:180: .3) ;



\end{tikzpicture}
\caption {\label 2}
\end{figure}

An innermost returning arc can be removed as shown in Figure 4 by an isotopy of the simple closed curve, but a returning arc may be created in the other $2$-simplex containing the edge.

\

Notice that the number of intersections with the $1$-skeleton is reduced by two.  The process of  removing returning arcs must end with a track or with a loop inside a $2$-simplex.
The set of vertices inside the curve does not change and so it will end with a track if initially there were vertices both inside and outside the curve.

 \begin{figure}[htbp]
\centering
\begin{tikzpicture}[scale =.8]

 \draw  (0,2)--(2,4)--(4,2)--(2,0)--(0,2)--(4,2) ;
\draw (2.5,2) --(3,1) ;
\draw (1.5,2) --(1,1) ;

\draw (2.5,2) arc (0:180: .5) ;
\draw [red] (1.05, .95) --(1.5,1.8)--(2.5,1.8)--(2.95,.95) ;
  \draw  (0,2) -- (4,2) ;

 \draw  (6,2)--(8,4)--(10,2)--(8,0)--(6,2)--(10,2) ;
\draw (8.5,2) --(7.6,.4) ;
\draw (7.5,2) --(7,1) ;

\draw (8.5,2) arc (0:180: .5) ;
\draw [red] (7.05, .95) --(7.5,1.8)--(8.3,1.8)--(7.5,.45) ;
  \draw  (6,2) -- (10,2) ;


\end{tikzpicture}
\caption {A Returning Arc}
\end{figure}

  A pattern $P$ in $K$ is a set of tracks that are simple closed curves and each  track separates so the tracks are the edges of tree
$D_P$ in which the vertices are the connected components after removing the tracks.

Suppose $T$ is a simple closed curve in $K$ that is in general position with respect to the triangulation.   Then $T$ is a track (or rather it can be straightened to be a track) if and only if in the intersection with each $2$-simplex there are no returning arcs.

Let $K$ be a $2$-complex that is a triangulation of a $2$-sphere.  Let $P$ be a pattern in $K$ consisting of a maximal set of tracks in $K$ no two of which are parallel.  Such a maximal set exists since the number of non-parallel tracks in a finite $2$-complex is bounded by the number of vertices plus the number of $2$-simplexes.  If the tracks in a pattern separate as they do 
in this situation, or more generally if $H^1(K, \Z _2) =0$, then any pattern $P$ in $K^2$ will determine
a  tree $D_P$ in which the edge set is the set of tracks and the vertex set is the set of components of $K^2 - P$.

 \begin{figure}[htbp]
\centering
\begin{tikzpicture}[scale=.8]

\fill [pink] (6,1.8)--(8,0)--(8.4,0)--(10,1.4)--(10,1.8) --(6,2.2) --cycle;
\fill [pink] (0,1.8)--(4,2.2)--(4,2.6)--(2.4,4)--(2,4) --(0,2.2) --cycle;
\fill [pink] (2,0)--(2.4,0) --(4,1.4)--(4,1.8)--cycle;
\fill [pink] (8,4)--(8.4,4) --(10,2.6)--(10,2.2)--cycle;

  \draw  (0,0) -- (0,4) ;
  \draw  (4,0) -- (4,4)--(0,4) ;
   \draw  (0,0) -- (4,4) ;
  \draw  (4,0)--(0,0) ;
  
\draw   (0,0) node {$\bullet $} ;
\draw   (4,0) node {$\bullet $} ;	
\draw  (0,4) node {$\bullet $} ;
\draw  (4,4) node {$\bullet $} ;
\draw   (16,0) node {$\bullet $} ;
\draw   (12,0) node {$\bullet $} ;	
\draw  (12,4) node {$\bullet $} ;
\draw  (16,4) node {$\bullet $} ;

 \draw  (6,0)--(6,4) --(10,0)--(10,4) ; 

\draw   (6,0) node {$\bullet $} ;
\draw   (10,0) node {$\bullet $} ;
\draw  (6,4) node {$\bullet $} ;
\draw  (10,4) node {$\bullet $} ;


\draw [blue] (0,2.6)--(1.6, 4);
\draw [blue] (4,2.6)--(2.4, 4);

\draw [blue] (6,2.6)--(7.6, 4);
\draw [blue] (10,2.6)--(8.4, 4);

\draw [blue] (0,1.4)--(1.6, 0);
\draw [blue] (4,1.4)--(2.4, 0);

\draw [blue] (6,1.4)--(7.6, 0);
\draw [blue] (10,1.4 )--(8.4, 0);

\draw [red,thick] (6,1.8)--(8,0) ;

\draw [red,thick] (8,4)--(10,2.2) ;

\draw [red,thick] (6,2.2)--(10,1.8) ;

\draw [red,thick] (0,2.2) -- (2, 4) ; 
\draw  [red,thick] (2, 0) -- (4, 1.8) ; 


\draw [red,thick] (0,1.8)--(4,2.2) ;

\draw (6,0) --(10,0) ;

\draw (6,4) --(10,4) ;

\draw  [left] (0,0) node {$u$} ;
\draw   [left] (6,0) node {$u$} ;
\draw  [left] (0,4) node {$v$} ;
\draw  [left] (6,4) node {$v$} ;



\draw  [right ] (4,4) node {$z$} ;
\draw  [right ] (4,0) node {$w$} ;

\draw  [right ] (10,4) node {$z$} ;
\draw  [right ] (10,0) node {$w$} ;

\draw [red] (13, 2)--(15,2);
\draw [blue] (12,4)--(13,2)--(12,0) ;
\draw [blue] (16,4)--(15,2)--(16,0) ;
\draw [red] (15,2) node {$\bullet $} ;

\end{tikzpicture}

\caption {\label 2}
\end{figure}

An example is shown in Figure 2 for the tetrahedron $T$.  The maximal pattern $P$ consists of four blue $3$-tracks and one red $8$-track.   The tree $D_P$ will have five edges corresponding to the tracks of $P$ and six vertices,  two of degree $3$ and four of degree $1$, as shown.   The component corresponding to the right hand vertex of degree $3$ is shown shaded.

\begin{theo}  Every vertex in the tree $D_P$ has degree (valency) one  or three.  A component of degree one contains one vertex.   A component of degree three contains no vertex and is a disc with two smaller discs removed.

\end {theo}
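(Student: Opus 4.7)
The plan is to analyze each component $C$ of $K^2\setminus P$ using the maximality of $P$ together with an innermost-chord argument, pinning down both the number of vertices of $K$ contained in $C$ and the number of boundary tracks.

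I start with vertex-containing components. For each vertex $v$ of $K$, the vertex-link $t_v^0$ — the small loop meeting each edge at $v$ in exactly one point — is itself a track. By maximality some $t_v\in P$ is parallel to $t_v^0$, so $t_v$ meets each edge at $v$ in one point and no other edges. Since $K^1$ is connected, the disc $D_v$ bounded by $t_v$ and containing $v$ contains no other vertex of $K$. Any further track of $P$ strictly inside $D_v$ either encircles $v$ (parallel to $t_v$, contradicting pairwise non-parallelism of $P$) or bounds a vertex-free sub-disc (ruled out below), so the component of $K^2\setminus P$ containing $v$ is exactly the interior of $D_v$: degree one, exactly one vertex.

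The key lemma is that no track $t\in P$ bounds a disc $D\subset S^2$ with no vertex of $K$. If it did, then $t$ has a pattern-arc, so $D$ contains at least one chord of $K^1$ (an edge-arc with both endpoints on $t$). Pick an innermost chord $c\subset e$: together with a sub-arc $\alpha$ of $t$ it bounds a sub-disc $D'\subset D$ containing no other chord. Then the interior of $D'$ is disjoint from $K^1$, so $D'\subset\bar\sigma$ for a single $2$-simplex $\sigma$, and in particular $\alpha\subset\bar\sigma$. But the endpoints of $\alpha$ both lie on the single edge $e$, whereas every pattern-arc in $\sigma$ joins two \emph{distinct} edges of $\sigma$ and at each intersection point on an edge the only continuation of $t$ lies in the $2$-simplex across that edge; so $\alpha$ cannot remain in $\bar\sigma$, a contradiction. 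The same argument applied to chords of a vertex-free annular component $\bar C$ with both endpoints on a single boundary rules out such chords, forcing every chord of $\bar C$ to span both its boundaries; hence $|t_1\cap e|=|t_2\cap e|$ for every edge, so $t_1,t_2$ would be parallel — contradiction. Together these rule out degree-one components without a vertex and all degree-two components.

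The hardest case is ruling out degree $d\ge 4$ components. Here $\bar C$ is a vertex-free planar surface bounded by tracks $t_1,\ldots,t_d$; by the no-empty-disc lemma each $t_k$ encloses, in $S^2\setminus\bar C$, a non-empty vertex set $V_k$, and the $V_k$ partition the vertex set of $K$. For any two chosen boundaries $t_i,t_j$ there is a separating simple closed curve $s$ in the interior of $\bar C$ bounding a pair-of-pants with $t_i$ and $t_j$; the vertex partition $\{V_i\cup V_j,\,V\setminus(V_i\cup V_j)\}$ determined by $s$ differs from the partition $\{V_k,\,V\setminus V_k\}$ induced by any $t_k$ (since every $V_l$ is non-empty), so $s$ is not parallel to any boundary track of $\bar C$. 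The main obstacle is then to realize $s$ as a genuine pattern-track and to rule out parallelism of $s$ with every non-boundary track of $P$; I expect this to require a careful innermost placement of $s$ inside a thin sub-annular region of $\bar C$ and invocation of the principle that isotopy through a vertex-free region preserves edge intersection counts, so that the constructed $s$ violates maximality.
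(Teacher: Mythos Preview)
Your innermost-chord arguments for the degree-one and degree-two cases are clean and correct, and they form a genuinely different route from the paper's.  The paper never argues by vertex partitions or by analyzing chords of a component; instead it works directly with a pair of \emph{adjacent} intersection points $a,b$ on a single edge $\gamma$ lying in the closure of the component, performs a cut-and-paste (``tubing'') of the incident tracks along the segment $[a,b]\subset\gamma$, and then removes returning arcs.  This surgery produces, constructively, a third track $u$ and exhibits the component as the region bounded by exactly $s,t,u$; the pair-of-pants structure (``two triangular regions joined by three bands'') drops out of the construction rather than being deduced by exclusion.

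The gap in your proposal is exactly where you flag it: the $d\ge 4$ case.  You produce only a topological simple closed curve $s$ in the interior of $\bar C$, and you have not shown how to realize it as a \emph{track}.  Your proposed ``careful innermost placement'' is not an argument, and the difficulty is real: isotoping $s$ inside $\bar C$ need not remove all returning arcs, while removing returning arcs in the usual way (pushing across a bigon in a $2$-simplex) may force $s$ to cross tracks of $P$ and leave $\bar C$.  One can in fact complete your line by observing that removing a returning arc never changes the induced vertex partition (the bigon lies in a single $2$-simplex, hence contains no vertices), so after straightening $s$ to a genuine track its partition $\{V_i\cup V_j,\ V\setminus(V_i\cup V_j)\}$ persists and still differs from that of every member of $P$; this contradicts maximality.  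But this step is missing from your write-up, and without it the $d\ge 4$ case is unproved.  The paper's tubing construction sidesteps the issue entirely: it builds the new track \emph{out of} two boundary tracks and an edge segment, so that it is a pattern curve from the start up to finitely many explicitly controlled returning arcs (Figure~4), and one reads off directly that the component has degree three rather than arguing by contradiction.
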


\begin{figure}[htbp]

\begin{tikzpicture}[scale=1.6]

 \centering


\fill [pink]  (12.75,1)--(12.75, 1.25) --(13.8,1.25)--(13.8,2)--(13.2,2)--(13.2, 2.2)--(14.8,2.2)--(14.8,2)--(14.2,2)--(14.2,1.25)--(15.25,1.25)--(15.25,1)--cycle;
 
\fill [pink]  (13.2, 2.2) --(13.4,2.2)--(13.4, 3)--(14.6,3)--(14.6,2.2)--(14.8,2.2)--(14.8,3.2)--(13.2,3.2)--cycle ;
\fill [pink] (12.75,1)--(13,1)--(13,-.3)--(15,-.3)--(15,1)--(15.25,1)--(15.25,-.7)--(12.75,-.7)--cycle;

\draw [red](12.75,1.25)--(13.8,1.25)--(13.8,2) --(13.2,2)--(13.2, 3.2)--(14.8,3.2)--(14.8,2)--(14.2,2)--(14.2,1.25)--(15.25,1.25) ;
\draw [red] (13.4,2.2)--(13.4, 3)--(14.6,3)--(14.6,2.2)--cycle;
\draw [red] (13,1)--(15,1)--(15,-.3)--(13,-.3)--cycle;;
\draw (13.7,1.3)--(14.3,1.3)--(14,.9)--cycle;
\draw (13.7,1.9)--(14.3,1.9)--(14,2.3)--cycle;

\draw [red] (12.75,1.25) --(12.75, -.7)--(15.25,-.7) --(15.25,1.25);
\draw [red] (13.8,1.6)--(14.2,1.6);
\draw  [left] (13.8,1.6) node {$_a$} ;
\draw  [right] (14.2,1.6) node {$_b$} ;
\draw (14, -1.1) node {(i)} ;
\draw [left] (13.2,2.6) node {$_u$};
\draw [right] (13.4,2.6) node {$_s$};
\draw [right] (13, .2) node {$_t$};
\draw [right] (14.8, 2.6) node {$_u$};
\draw [left] (12.75,.2) node {$_u$};

\end{tikzpicture}
\end {figure}

\

 \begin{figure}[htbp]
\centering
\begin{tikzpicture}[scale=.5]
\draw (0,0)--(0,8)--(8,8)--(8,0)--cycle ;
\draw (1,1)--(3.5,1)--(3.5,7)--(1,7)--cycle;
\draw (7,1)--(4.5,1)--(4.5,7)--(7,7)--cycle;
\fill [pink] (0,0)--(0,8)--(1,8)--(1,0)--cycle;
\fill [pink] (8,0)--(8,8)--(7,8)--(7,0)--cycle;
\fill [pink] (1,0)--(7,0)--(7,1)--(1,1)--cycle;
\fill [pink] (1,7)--(7,7)--(7,8)--(1,8)--cycle;
\fill [pink] (3.5,1)--(3.5,7)--(4.5,7)--(4.5,1)--cycle;

\draw  [left ] (3.5,4) node {$_a$} ;
\draw  [right] (4.5,4) node {$_b$} ;
\draw [red] (3.5,4)--(4.5,4) ;
\draw (1.2,4) node {$_s$};
\draw  (6.8,4) node {$_t$};
\draw  (2,8.2) node {$_u$};
\draw (4,-1) node {(ii)};
\draw (2.5,6.5)--(4,8.2)--(5.5, 6.5)--cycle ;
\draw (2.5,1.5)--(4,-.2)--(5.5, 1.5)--cycle ;

\end{tikzpicture}

\caption {\label 2}
\end{figure}

\begin {proof}
Suppose $P$ intersects a $1$-simplex 
$\gamma $   in more than one point.  Let $a,b$ be adjacent points  of $\gamma\cap P$.  Removing small open neighbourhoods of  $a,b$ and replacing them with lines
parallel to $\gamma $ joining the end points of the open neighbourhoods will create either one simple closed curve (scc)   or two simple closed curves.     If the points $a,b$ lie in the same track $u$,  then the process of removing small neighbourhoods of  $a,b$ and joining the ends with lines parallel to $\gamma$ will create two simple closed curves $S,T$.  This can be seen from Figure 6(i). Now removing any returning arcs from $S,T$ will create two tracks,
which must be parallel to tracks $s,t$ in $P$. The removal of returning arcs cannot end with the total removal of an  scc.  If this happens then the last returning arc removed must have been a returning arc in the original track $u$, and so it would not have been a track.  Both $s$ and $t$ have fewer intersections with the $1$-skeleton than $u$. The region to which the interval $ab$ belongs is bounded by the tracks $s,t,u$, and so has degree three.

If $a,b$ lie in distinct tracks $s,t$ then just one scc $U$ is created, as in Figure 6(ii).  Now removing returning arcs will end in a track parallel to a track $u$ in $P$, and again the region of the interval  $ab$ has index three.  The process of removing returning arcs must eventually stop as the number of intersection points with the $1$-skeleton is reduced by two at each removal.
In the end  $U$ has become a track.   It cannot end with  $U$ being removed entirely  as this would mean that $s,t$ were parallel.
 Since we are dealing with a maximal set of tracks, this track  will be parallel to a  track $u$ in $P$.   It can be seen that 
there is a vertex $v$ of $D_P$ with incident edges $s,t,u$.   The region corresponding to $v$ will consist of  triangular regions in two $2$-simplexes joined by three bands
The vertex component is a disc with two discs removed.   This can be described as a disc with two punctures.

   Removing a returning arc joining two points $a,b$  in one $2$-simplex will create a returning arc in the other $2$-simplex containing $a,b$ if $a,b$
are joined to points in the same edge.  Thus  it will be a track if and only  the situation in the  two $2$-simplexes containing $\gamma$ is as in Figure 6(i).  If the situation is as Figure 6(ii) then  one returning arc is created, and if it is as in Figure 6(iii) then two returning arcs are created.

 \begin{figure}[htbp]
\centering
\begin{tikzpicture}[scale=.6]

 \draw  (1,3)--(1.7,2) --(1,1);
\draw (3,3)--(2.3,2) --(3,1) ;
\draw (7,3)--(7.7,2)--(7,1);
\draw (7.35,  3.35)--(8.3,2)--(9,1);
\draw (13,3)--(13.7,2)--(13,1);
\draw (13.35,  3.35)--(14.3,2)--(13.35,.65);
\draw [blue] (1.05,3.05) --(1.605,2.25)--(2.395,2.25)--(2.95,3.05) ;
\draw [blue] (1.05,.95) --(1.605,1.75)--(2.395, 1.75)--(2.95,.95) ;
\draw [blue] (7.05,.95) --(7.605,1.75)--(8.395, 1.75)--(8.95,.95) ;

 \draw  (6,2)--(8,4) --(10,2)--(8,0)--cycle ; 
 \draw  (0,2)--(2,4) --(4,2)--(2,0)--cycle ; 
 \draw  (12,2)--(14,4) --(16,2)--(14,0)--cycle ; 
\draw [red] (1.6,2.2)--(2.4,2.2) ;
\draw [red] (1.6,1.8)--(2.4,1.8) ;
\fill [pink] (1.6,2.2)--(2.4,2.2)--(2.3,2)--(2.4,1.8)--(1.6,1.8)--(1.7,2)--cycle ;
\fill [pink] (7,3)--(7.35,3.35)--(8.3,2)--(8.4,1.8)--(7.6,1.8)--(7.7,2)--cycle ;
\fill [pink] (13,3)--(13.35,3.35)--(14.3,2)--(13.35,.65)--(13,1)--(13.7,2)--cycle ;



  \draw  (0,2) -- (4,2) ;
  \draw  (6,2) -- (10,2) ;
   \draw  (12,2) -- (16,2) ;

\draw  [above,left ] (1.6,2.1) node {$_a$} ;
\draw  [above,right] (2.3,2.1) node {$_b$} ;
\draw  [right] (1.1,2.8) node {$_s$} ;
\draw  [left] (2.9,2.8) node {$_t$} ;

\end{tikzpicture}

\end{figure}

 \begin{figure}[htbp]
\centering
\begin{tikzpicture}[scale=.6]


\draw [blue] (1.05,3.05) --(1.605,2.25)--(2.395,2.25)--(2.95,3.05) ;
\draw [blue] (1.05,.95) --(1.605,1.75)--(2.395, 1.75)--(2.95,.95) ;
\draw [blue] (7.05,.95) --(7.605,1.75)--(8.395, 1.75)--(8.95,.95) ;

 \draw  (6,2)--(8,4) --(10,2)--(8,0)--cycle ; 
 \draw  (0,2)--(2,4) --(4,2)--(2,0)--cycle ; 
 \draw  (12,2)--(14,4) --(16,2)--(14,0)--cycle ; 
\fill [pink] (7,3)--(7.35,3.35)--(8.1,2.25)--(7.5,2.25)--cycle ;
\fill [pink] (13,3)--(13.35,3.35)--(14.1,2.25)--(13.5,2.25)--cycle ;
\draw (13,3)--(13.35,3.35)--(14.1,2.25)--(13.5,2.25)--cycle ;
\draw (7,3)--(7.35,3.35)--(8.1,2.25)--(7.5,2.25)--cycle ;
\fill [pink] (13,1)--(13.35,.65)--(14.1,1.75)--(13.5,1.75)--cycle ;
\draw (13,1)--(13.35,.65)--(14.1,1.75)--(13.5,1.75)--cycle ;



  \draw  (0,2) -- (4,2) ;
  \draw  (6,2) -- (10,2) ;
   \draw  (12,2) -- (16,2) ;


\draw  (2,-.5) node {(i)} ;
\draw  (8,-.5) node {(ii)} ;
\draw  (14,-.5) node {(iii)} ;

\end{tikzpicture}

\caption {\label 2}
\end{figure}

\begin{figure}[htbp]
\centering
\begin{tikzpicture}
\fill [pink]  (4,0) --(3,0)--(3.5,.5)--(4,0);
\fill [pink] (0,0)--(.45,.45)--(.9,0) --(0,0);
\fill [pink]  (2,2)--(2.5,1.5)--(1.5,1.5)--(2,2);
\fill [pink] (2.7,1.3)--(3,1) --(2.7,0)--(2.2,0)--(2.7,1.3); ;
\fill [pink]  (1,1)--(1.6,0)--(1.2,0)--(.65,.65)--(1,1););

\fill [pink]  (10,0) --(9.2,0)--(9.4,.6)--(10,0);
\fill [pink] (6,0)--(6.45,.45)--(6.9,0) --(6,0);
\fill [pink]  (8,2)--(8.5,1.5)--(7.5,1.5)--(8,2);
\fill [pink]  (7,1)--(7.6,0)--(7.2,0)--(6.65,.65)--(7,1);
\fill [pink] (2.2,0)--(2.7,0)--(3,1)--(2.7,1.3)--(1.4,1.4) -- (1.2,1.2)--(2.2,0);
\fill[pink](8.2,0)--(8.7,0)--(9,1)--(8.7,1.3)--(8.2,0);
\draw [red]  (4,0) --(3,0)--(3.5,.5)--(4,0);
\draw[red] (0,0)--(.45,.45)--(.9,0) --(0,0);
\draw[red]  (2,2)--(2.5,1.5)--(1.5,1.5)--(2,2);
\draw[red] (2.7,1.3)--(3,1) --(2.7,0)--(2.2,0)--(1.2,1.2)--(1.4,1.4)--(2.7,1.3); 
\draw[red]  (1,1)--(1.6,0)--(1.2,0)--(.65,.65)--(1,1);

\draw[red]  (10,0) --(9.2,0)--(9.4,.6)--(10,0);
\draw[red](6,0)--(6.45,.45)--(6.9,0) --(6,0);
\draw[red]  (8,2)--(8.5,1.5)--(7.5,1.5)--(8,2);
\draw[red]  (7,1)--(7.6,0)--(7.2,0)--(6.65,.65)--(7,1);
\draw[red](8.2,0)--(8.7,0)--(9,1)--(8.7,1.3)--(8.2,0);

\draw (0,0) -- (4,0)--(2,2) -- (0,0) ;
\draw (6,0)--(10,0)--(8,2)--(6,0) ;



\draw [red] (1.5,1.5)--(2.5,1.5) ;

\draw [red] (.65,.65)--(1.2,0) ;
\draw [red] (3,0)--(3.5,.5);
\draw [red] (.9,0)--(.45,.45);
\draw [red] (2.7,0)--(3,1);

\draw [red] (9.4,.6)--(9.2,0);
\draw [red] (6.9,0)--(6.45,.45);


\end{tikzpicture}

\caption {\label 2}

\end{figure}


 If a vertex region corresponds to a vertex of $D_P$  of degree one, then it will be bounded by a track intersecting each edge at most once.  The region must contain a single vertex of the triangulation.   Conversely every vertex of the triangulation determines a track surrounding it,  and the region inside this track will correspond to a vertex of $D_P$ of degree one.
 A $1$-simplex of $K$ determines a path in $D_P$ starting and ending in a vertex of degree one. Every other vertex visited has degree three.  The path will backtrack at a vertex if and only if the corresponding part of the $1$-simplex joins two points in the same track.

\end {proof}

Let $v$ be the number of vertices of the triangulation, $e$ the number of edges ($1$-simplexes) and $f$ the number of faces ($2$-simplexes), then  $3f =2e$ since
every face has $3$ edges and each edge belongs to two faces.  Thus $f$ is even. Every track determines a proper decomposition of the set of vertices, i.e. there are no tracks bounding a region with no vertices.   Hence the
 tree $D_P$ has $v$ vertices of degree one.  If $v_P$ is the number of vertices of $D_P$, there are $v_P -v$ vertices of degree $3$.  But our argument above has shown that every vertex of degree $3$ involves exactly two faces.  Hence $2v_P-2v = f$.  The number of vertices of a finite tree is the number of edges plus one.  Hence $v_P = e_P +1$.   Thus $e_P = v_P-1 = {1\over2} f +v-1$, which shows the number of  tracks in a maximal pattern does not vary with the pattern.   
 
 For example if $K=T$ the faces of a tetrahedron, so that
 $v=f=4$, then $e_P =5$.   There are four $3$-tracks and one other.
 There are infinitely many possibilities for the track that is not a $3$-track. There is the $4$-track and also the ones  shown in Figure 9.  Here $m$ and $n$  are positive integers
 which are  the number of parallel lines in the places indicated.    This will be a $4(m+n)$-pattern, which will be a track if $m$ and $n$ are coprime.  It will be a pattern since the numbers 
 match up at every edge and we have shown that the only  possibility for a pattern which does not include $3$-tracks are parallel copies of the same track.  There are parallel copies if and only if $m$ and $n$ are not coprime.   These are the only tracks in $T$ that are not $3$-tracks or $4$-tracks.
 
 For $m=n=1$ we have an $8$-track which is important in our theory.

 \begin{figure}[htbp]
\centering
\begin{tikzpicture}[scale=.7]

  \draw  (0,0) -- (0,4) ;
  \draw  (4,0) -- (4,4)--(0,4) ;
   \draw  (0,0) -- (4,4) ;
  \draw  (4,0)--(0,0) ;
  
\draw   (0,0) node {$\bullet $} ;
\draw   (4,0) node {$\bullet $} ;	
\draw  (0,4) node {$\bullet $} ;
\draw  (4,4) node {$\bullet $} ;

 \draw  (6,0)--(6,4) --(10,0)--(10,4) ; 

\draw   (6,0) node {$\bullet $} ;
\draw   (10,0) node {$\bullet $} ;
\draw  (6,4) node {$\bullet $} ;
\draw  (10,4) node {$\bullet $} ;


\draw [red] (6,1.8)--(8,0) ;

\draw [red] (8,4)--(10,2.2) ;

\draw [red] (6,2.2)--(10,1.8) ;

\draw [red] (0,2.2) -- (2, 4) ; 
\draw  [red] (2, 0) -- (4, 1.8) ; 


\draw [red] (0,1.8)--(4,2.2) ;

\draw (6,0) --(10,0) ;

\draw (6,4) --(10,4) ;

\draw  [red ] (1,3.1) node {$m$} ;
\draw  [red ] (3,.9) node {$m$} ;

\draw  [red ] (9,3.1) node {$m$} ;
\draw  [red ] (7,.9) node {$m$} ;

\draw  [red ] (1,1.9) node {$n$} ;
\draw  [red ] (3,2.1) node {$n$} ;

\draw  [red ] (9,1.9) node {$n$} ;
\draw  [red ] (7,2.1) node {$n$} ;

\draw  [left] (0,0) node {$u$} ;
\draw  [left] (6,0) node {$u$} ;
\draw  [left] (0,4) node {$v$} ;
\draw  [left] (6,4) node {$v$} ;



\draw  [right ] (4,4) node {$z$} ;
\draw  [right ] (4,0) node {$w$} ;

\draw  [right ] (10,4) node {$z$} ;
\draw  [right ] (10,0) node {$w$} ;

\end{tikzpicture}
\caption {\label 2}

\end{figure}

\end {section}
\begin {section} {Patterns in $3$-manifolds}
 
 Let $M$ be a $3$-manifold.
Let $f : S^2\rightarrow M$  be an injective  general position map (see Hempel \cite {[JH76]}, Chapter 1), in which $f$ is in general position with respect to a triangulation $K$ of $M$.   An $i$-piece of $f$ is defined to be a component of $f^{-1}(\sigma)$ where $\sigma$ is an $(i+1)$-simplex of $K$. Thus a  $0$-piece is a point of $S^2$. A $1$-piece is either an scc (simple closed curve) or an arc joining two $0$-pieces.  If there are no $1$-pieces that are scc's, then each $2$-piece  has boundary that is is a union of $1$-pieces.
 One can use surgery along simple closed curves to change $f$ to a map in which there are no $1$-pieces that are scc's,  and in which every $2$-piece is a disc.
The $2$-pieces will then give  a cell decomposition (tessellation) of the $2$-sphere.   If $R$ is a $1$-piece with end points $u,v$ whose images under $f$ are in the same $1$-simplex, then,  the restriction of $f$ to $R$ is called a returning arc.  This is a similar definition to the one for $2$-simplexes.

If $M$ is a $3$-manifold and $M$ is triangulated so that $M=|K|$ where $K$ is a finite $3$-complex,   then a pattern $P$ in $|K^2|$ determines a {\it patterned surface} $S$   such that  for each $3$-simplex $\rho $, $S\cap |\rho |$ consists of 
disjoint properly embedded discs and  $S\cap |K^2| =P$.  A  patterned surface is determined, up to isotopy, by the intersection $P\cap |K^1|$.  If the pattern in $|K^2|$ is normal,
then the patterned surface is a {\it normal surface}.

If $f :S^2\rightarrow M$ has no $1$-pieces that are returning arcs or scc's, then the intersection of $f(S^2)$ with the $2$-skeleton of $M$ is a pattern $P$.
   There is an isotopy from $f$ to  $f' : S^2\rightarrow M$ in which the image is the patterned surface determined by $P$.

Let $\gamma $ be a $1$-simplex of $M$.  Two points $p,q \in \gamma \cap f(S^2)$ are said to be {\it }removable if there is a homotopy from  $f$  to a map $f' : S^2 \rightarrow M$ such that $f(x)=f'(x)$ for every 
$x$ that is not in the interior of a simplex with $ \gamma $ as a face and $\gamma \cap f'(S^2)$ is the same as  $\gamma \cap f(S^2)$ but with $p,q$ removed.

The pair of end points of a returning arc $R$ are removable by the following homotopy.
 Let $\sigma $ be the $2$-simplex of $K$ such that $f(R) \subset \sigma $.  Let $V$ be a regular neighbourhood of $R$ in $S^2$.  Let $ V^{\circ }$ be the interior of $V$
regarded as a subspace of $V$.  Let $\beta V$ be the boundary of $V$ regarded as a subspace of $S^2$, so that $\beta V = V- V^{\circ}$.
Let  $\gamma$ be the $1$-simplex containing the end points of $R$.  The regular neighbourhood $V$ is a disc and $\beta V = \delta V$ is a  simple closed curve in $S^2$.  The union of all the $3$-simplexes that contain $\gamma $ is a closed ball $B$ and $f(\beta V) \subset B^{\circ }-\sigma $, which is contractible.  Define $f': S^2 \rightarrow M$ so that $f'$ is continuous, $f'$ and $f$ are the same when restricted to $S^2 - V^{\circ}$, and $f'(V) \subset B^{\circ} - \sigma$.  Note that removing $p$ may create more $1$-pieces that are returning arcs or sccs, but the size of the intersection with the $1$-skeleton goes down by two.   

Let $M=|K|$ be a $3$-manifold, where $K$ is a finite $3$-complex.   Let $H^1(K,\Z_2)=0$ so that tracks in $K^2$  separate.   In this situation any pattern $P$ in $K^2$ will determine
a finite tree $D_P$ in which the edge set is the set of tracks and the vertex set is set of components of $K^2 - P$.  In the situation when the patterned surface corresponding to $P$ is a maximal set of normal $2$-spheres, it is proved in \cite {[T94]} that a vertex of $D_P$ of degree (valency)  greater than one corresponds to a component  which is  a punctured 
$3$-ball.  In fact a slightly stronger result is true.
\begin {theo} Let $P$  be a pattern in $K^2$, in which no two tracks are parallel, for which the patterned  surface corresponding to $P$ is a maximal set of  $2$-spheres,  then in the tree $D_P$ every vertex has degree one, two or three.   Each vertex of degree two or three is  a component of $K^2 - P$ that is a punctured $3$-ball.  The same is true if  the $2$-spheres are restricted to normal spheres.
\end {theo}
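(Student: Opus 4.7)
The plan is to lift the 2-dimensional argument of the earlier Theorem by replacing each curve-joining surgery in a $2$-simplex by a tube-surgery on the patterned surface $S$ corresponding to $P$. Let $v$ be a vertex of $D_P$ with valency at least two, and write $R$ for the closure in $M$ of the corresponding component of $|K^2|-P$; the boundary of $R$ is the disjoint union of the $2$-spheres indexing the edges of $D_P$ incident with $v$. Pick a $1$-simplex $\gamma$ meeting the interior of $R$ and let $a,b$ be adjacent points of $\gamma\cap P$ lying on $\partial R$.

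The three-dimensional analogue of joining $a$ and $b$ by lines parallel to $\gamma$ is a tube-surgery on $S$: take a tubular neighborhood $N$ of the arc $[a,b]\subset\gamma$, remove the two small discs $S\cap N$ near $a$ and $b$, and glue in the annulus $\partial N\setminus\mathrm{int}(S\cap N)$. If $a$ and $b$ lie on distinct sphere components $s,t$ of $S$, the surgery produces a single new $2$-sphere $U$; if they lie on the same component, it produces two $2$-spheres. The resulting surface may contain $1$-pieces that are returning arcs, but these can be eliminated by the homotopies described in the preceding section, each reducing the intersection count with $|K^1|$ by two. The process terminates at a patterned (and, if $P$ was normal, normal) $2$-sphere $U'$, which by maximality must be parallel to some existing sphere $u$ of $S$; exactly as in the 2D argument, $U'$ cannot be annihilated by returning-arc removals without forcing $s$ and $t$ to have been parallel, contradicting the non-parallelism hypothesis.

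Every $1$-simplex crossing $R$ thus yields a triple $\{s,t,u\}$ of boundary spheres of $R$; consistency of these triples across different choices of $\gamma$ forces the valency at $v$ to be at most three, with valency two arising when the triple degenerates (i.e.\ when the surgery sphere $U'$ is itself parallel to $s$ or $t$). To upgrade this combinatorial bound to the topological claim, cap each boundary sphere of $R$ with a $3$-ball to obtain a closed $3$-manifold $\widehat R$. Maximality precludes any essential $2$-sphere in $\widehat R$ that could be normalized into the pattern without being parallel to a sphere already present; combining this with an induction on the number of $3$-simplices of $R$ (splitting $R$ along the newly produced sphere $u$ at each step) yields $\widehat R\cong S^3$, so that $R$ is a punctured $3$-ball. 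The normal-sphere statement follows with no change, since both the tube-surgery and the returning-arc removals preserve normality.

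The principal obstacle is this last topological step. The combinatorial surgery produces plenty of patterned spheres, but what is needed is that \emph{every} embedded $2$-sphere in $R$, patterned or not, is either parallel to a boundary component or bounds a $3$-ball, so that the absence of additional patterned spheres actually precludes a non-trivial prime factor of $\widehat R$. This is precisely the content of the lemma from \cite{[T94]} that the present paper sets out to reprove, and the inductive scheme above, feeding each surgery sphere back into the same argument on a strictly smaller piece, is what I expect to close the loop.
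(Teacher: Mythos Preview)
Your tube-surgery/returning-arc framework is the right three-dimensional analogue, but two ingredients that the paper supplies are missing from your proposal, and the second one is exactly the ``principal obstacle'' you flag.

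First, you simply \emph{pick} adjacent points $a,b$ on $\gamma\cap\partial R$ and then branch on whether they lie on the same or different boundary spheres. The paper does not do this: it first proves (by the cutting argument borrowed from \cite{[T94]}) that if $R$ has at least two boundary spheres then there \emph{must} exist some $1$-simplex $\gamma$ with adjacent intersection points lying on distinct boundary spheres $s,t$. If every adjacent pair were on the same sphere, one removes tube neighbourhoods along the $1$-skeleton, observes that the traces of the $2$-simplices on $R$ are discs each meeting only one boundary sphere, and cuts along them down to pieces of $3$-simplices, reaching a contradiction with the multiple boundary components. Without this lemma your case split is vacuous in the bad direction. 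Relatedly, in three dimensions annihilation of the surgered sphere does \emph{not} force $s\parallel t$ as you claim; it is exactly what happens in the degree-two case, when the expanding ball swallows all of $R$.

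Second, and more seriously, the punctured-ball conclusion in the paper is not obtained by capping off and invoking irreducibility or induction. It is obtained by \emph{tracking a $3$-ball through the returning-arc removals}. After the tube surgery the new sphere bounds a $3$-ball (the two caps joined by the solid tube), and the paper analyses, case by case in a tetrahedron (the four configurations of Figure~6), exactly which $3$-ball is glued on when a returning arc is removed: each removal enlarges the current ball by a ball bounded by an annulus and one or two triangle- or square-discs. When the process terminates, the enlarged ball is all of $R$ with $s$ and $t$ capped, minus a collar of the final patterned sphere $u$; hence $\partial R=\{s,t,u\}$ and $R$ is a thrice-punctured ball (or twice-punctured if the process exhausts $R$). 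Your proposed route---cap to $\widehat R$, argue no essential sphere survives maximality, deduce $\widehat R\cong S^3$---would require either the Poincar\'e conjecture or exactly the Thompson lemma that the theorem is meant to reprove, so it is circular as stated. The missing idea is the local Figure~6 analysis showing each returning-arc removal is a $3$-ball expansion.
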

\begin {proof} Part of the previous proof works with some adjustments. 

Let $V$ be the closure  of a component of $K^2-P$ corresponding to a vertex of $D_P$ of degree at least two.  An argument from \cite {[T94]} shows that there will be a
  $1$-simplex of $K$, intersecting  two distinct tracks $s,t$ of $P$ in the boundary of $V$ at adjacent points $a,b$ on the $1$-simplex $\gamma $ where $a\in s\cap \gamma,  b\in t\cap\gamma$.    Suppose this does not happen.       Then if we cut $V$ up along the faces of every $2$-simplex that it intersects (or perhaps more precisely removing a small open neighbourhood of those faces )  we will be cutting along 
  discs,  where each disc is a shaded region in Figure 7.   We end up with $3$-balls that are the components of intersection of $V$ with a particular $3$-simplex.  We are assuming that adjacent points  of intersection of $V$ with the $1$-skeleton belong to the same track.  Cutting along such discs will disconnect $V'$, but there will be a  component with more than one boundary component.   Cutting along the boundary of all  the $3$-simplexes yields $3$-balls with connected boundary $2$-spheres, each intersecting at most one track.  This  is a contradiction.

 \begin{figure}[htbp]
\centering
\begin{tikzpicture}[scale=.6]


  \draw  (0,0) -- (0,4) ;
  \draw  (4,0) -- (4,4)--(0,4) ;
   \draw  (0,4) -- (4,0) ;
  \draw  (4,0)--(0,0) ;

 \draw  (6,0)--(6,4) --(10,0)--(10,4) ;

\draw [red]  (1.5,0) -- (0,2) ;
\draw [red]  (7.5,0) -- (6,1.8) ;
\fill [pink] (7.5,0)--(6,1.8)--(6,3)--(8.5,0)--cycle ;

\draw [red]  (2.5,0) -- (4,2) ;
\draw [red]  (8.5,0) -- (6,3) ;

\draw (6,0) --(10,0) ;

\draw (6,4) --(10,4) ;


  \draw  (12,0) -- (12,4) ;
  \draw  (16,0) -- (16,4)--(12,4) ;
    \draw  (16,0)--(12,0) ;

\fill [pink] (15,0) --(15,1)--(15.2,1.2)--(14,2.4)--(14,0)--cycle ;

\draw [blue] (14.1,4)--(14.1,2.4)--(15.2,1.3)--(16,2.1);
\draw [red]  (14,0) --(14,4) ;
\draw [red] (15,0) --(15,1)--(16,2);
\fill [pink] (20,0)--(20,4)--(21,4)--(21,0)--cycle ;
\draw [red]  (20,0) --(20,4) ;
\draw [red] (21,0)--(21,4);
\fill [pink] (1.5,0)--(1,0.7)--(3,0.7)--(2.5,0)--cycle ;
\draw [blue] (0,2.1)--(1,0.75)--(3,0.75)--(4,2.1) ;


\draw (18,0) --(22,0) ;

\draw (18,4) --(22,4) ;

 \draw  (12,4) -- (16,0) ;

\draw  [left] (0,0) node {$u$} ;
\draw  [left] (0,4) node {$v$} ;

\draw  [right ] (4,0) node {$w$} ;
\draw  [right ] (4,4) node {$z$} ;


\draw (1.2,0.15) node {$_a$} ;
\draw   (2.8,0.15) node {$_b$} ;
\draw  [right] (.5,1.5)  node {$_s$} ;
\draw  [left] (3.5,1.5) node {$_t$} ;

 \draw  (18,0)--(18,4) --(22,0)--(22,4) ;

\end{tikzpicture}

\end{figure}

 \begin{figure}[htbp]
\centering
\begin{tikzpicture}[scale=.6]


  \draw  (0,0) -- (0,4) ;
  \draw  (4,0) -- (4,4)--(0,4) ;
   \draw  (4,4) -- (0,0) ;
  \draw  (4,0)--(0,0) ;
  
\

\draw [red]  (1.5,0) -- (0,2) ;
\draw [red]  (7.5,0) -- (6,1.8) ;
\fill [pink] (7.5,0)--(6,1.8)--(6,3)--(8.5,0)--cycle ;

\draw [red]  (2.5,0) -- (4,2) ;
\draw [red]  (8.5,0) -- (6,3) ;

\draw (6,0) --(10,0) ;
 \draw  (6,4)--(6,0) --(10,4)--(10,0) ; 

\draw (6,4) --(10,4) ;


  \draw  (12,0) -- (12,4) ;
  \draw  (16,0) -- (16,4)--(12,4) ;
    \draw  (16,0)--(12,0) ;

\fill [pink] (15,0) --(15,1.9)--(14,1.9)--(14,0)--cycle ;

\draw [blue] (14.1,4)--(14.1,1.9)--(16,1.9);
\draw  (6,4)--(6,0) --(10,4)--(10,0) ; )--(16,2.1);
\draw [red]  (14,0) --(14,4) ;
\draw [red] (15,0) --(15,1.8)--(16,1.8);
\fill [pink] (20,0)--(20,4)--(21,4)--(21,0)--cycle ;
\draw [red]  (20,0) --(20,4) ;
\draw [red] (21,0)--(21,4);
\fill [pink] (1.5,0)--(1,0.7)--(3,0.7)--(2.5,0)--cycle ;
\draw [blue] (0,2.1)--(1,0.75)--(3,0.75)--(4,2.1) ;


\draw (18,0) --(22,0) ;

\draw (18,4) --(22,4) ;

 \draw  (16,4) -- (12,0) ;

\draw  [left] (0,0) node {$u$} ;
\draw  [left] (0,4) node {$v$} ;

\draw  [right ] (4,0) node {$w$} ;
\draw  [right ] (4,4) node {$z$} ;

\draw (2,-1) node {(i)} ;
\draw  (8,-1)  node {(ii)} ;
\draw  (14,-1)  node {(iii)};
 \draw  (6,4)--(6,0) --(10,4)--(10,0)  ;
\draw   (20,-1) node {(iv)};

 \draw  (18,4)--(18,0) --(22,4)--(22,0) ;

\end{tikzpicture}
\caption {\label 2}
\end {figure}

Assume then that $a,b$ are as above in different tracks $s,t$. 
There will now be a $2$-sphere  $S$ which is the union  of the patterned surface corresponding to $s$ with a small neighbourhood of $a$ removed and the  patterned surface corresponding to $t$ with small neighbourhoods of $b$ removed   together with a tube joining the boundary circles.
The $2$-sphere $S$ will usually not be a patterned $2$-sphere,  but it will become one by removing returning arcs, as described above. 
It you have a surface in general position in a $3$-manifold, then one can  get a patterned surface by removing any simple closed curves in $2$-simplexes
and all returning arcs, unless the whole manifold is obtained.


    Now removing a returning arc in $S$  may create more than one  returning arc, as in Figure 11, since $\gamma$ will belong to more than two $2$-simplexes. The initial
$2$-sphere $S$ bounds a $3$-ball consisting of two $3$-balls bounded by $s$ and $t$ joined together by the  filled tube. In $M$ $s$ and $t$ might not bound $3$-balls, but we assume they do for this argument.   Removing a returning arc is done by expanding this $3$-ball,  and the possible expansions 
are shown in Figure 10.  In Figure 10(i) no returning arc has been created.  In Figure  10(ii)  returning arcs are created in simplexes with face $uv$ or $uz$ and the $3$-ball is expanded to include a $3$-ball bounded by an annulus and two discs each  bounded by a $3$-track.  In Figure 10(iii) returning arcs may  be created in simplexes with faces $uz$ or  $vw$ and the $3$-ball is expanded to include a $3$-ball bounded by the shaded region and a disc bounded by the $3$-track shown in blue.  In Figure 10(iv) returning arcs may be  created in simplexes with a face in four of the six edges of $T$ and the $3$-ball is expanded to include a $3$-ball bounded by an annulus and two discs each  bounded by a $4$-track. Note that in the process of expanding the $3$-ball in the cases of Figure 10(ii) and Figure 10(iv) as just described, two returning arcs will occur in the same $2$-simplex but $s, t$ are not parallel.  The two returning arcs are both removed and replaced by two parallel lines which are part of parallel tracks in the tetrahedron.

Once all the returning arcs have been removed the boundary of the expanded $3$-ball will be a patterned $2$-sphere $U$ unless we have removed all of $s$  and $t$  which  would mean that $V$ had just two boundary $2$-spheres.   If $V$ had more than two boundary $2$-spheres, then since we had a maximal set of tracks corresponding to patterned $2$-spheres, $U$ must be parallel to $u$, a track in $P$, and $V$ has $s,t,u$ as boundary $2$-spheres.  After all returning arcs have been removed one ends up with a track that determines a  patterned $2$-sphere $U$ if the vertex region has degree more than $2$, and it has degree $2$ if all of $s$ and $t$ have been removed.
 This can happen 
without $s$ and $t$ being parallel, unlike the earlier theorem.

Each $2$-piece of $u$ will be parallel to a $2$-piece of $s$ or $t$ apart from ones that arise from the Figure 10(i) or the Figure 10(iii) situation, which are $4$-sided and $3$-sided respectively.  Thus if $s$ and $t$ are normal patterns then so is $u$.  If we start with a pattern $P$ in which the tracks are a maximal set of non-parallel, normal $2$-spheres,  then the proof 
works fine. 
\end {proof}


\begin{figure}[htbp]
\centering
\begin{tikzpicture}[scale=.6]

  \draw  (5.5,.5)--(4.5,2)--(-.5,2)--(-1.5,.5);
  
  \draw [dashed] (.5,.5)--(-.5,2);
  \draw (-.5,2)--(-.5,4);
\draw (3.5,.5)--(4.5,2)--(4.5,4);

 \draw [red, dashed] (1.5,.5)-- (.5,2);
 \draw [red]  (-.5,.5)--(.5,2)--(.5,3) --(3.5,3)--(3.5,2)--(2.5,.5) ; 
\draw [red,dashed] (3.5,2)--(4.5, .5);


\draw [red] (1.8,3.1)--(2,3)--(1.8, 2.9) ;
\draw [red]  (-.35,1)--(0, 1.2)--(-.1, .85) ;
\draw [red]  (7.65,1)--(8, 1.2)--(7.9, .85) ;
\draw [red]  (10.7, 1)--(10.65, .7)--(11,.9) ;

  \draw  (13.5,.5)--(12.5,2)--(7.5,2)--(6.5,.5);
  
  \draw [dashed] (8.5,.5)--(7.5,2);
  \draw (7.5,2)--(7.5,4);
\draw (11.5,.5)--(12.5,2)--(12.5,4);

 \draw [red, dashed] (9.5,.5)-- (8.5,1.8)--(12, 1.8)--(13,.5);
 \draw [red]  (7.5,.5)--(8.4,1.7)--(11.4,1.7)--(10.5,.5) ; 
\draw [red,dashed] (3.5,2)--(4.5, .5);


\end{tikzpicture}
\caption {\label 2}
\end{figure}

\end {section}
\begin {section} {The Recognition Algorithm}


Let $M$ be a compact triangulated simply connected $3$-manifold with no boundary.     Let $P$ be a pattern in $K^2$ in which the tracks are a maximal collection of normal tracks which correspond to $2$-spheres in $M$ and no two are parallel.   These tracks are the edges of a finite tree $D_P$.  Then $M$ will be a $3$-sphere if and only if the regions corresponding to the vertices of
$D_P$ of degree one are all $3$-balls.   This is because all the other vertex regions are punctured $3$-balls and they will fit together to form one big  punctured $3$-ball.

A vertex region is a $3$-ball if it contains a vertex of the triangulation,  since, by the maximality of $P$, it will contain just one vertex.  Unlike the $2$-dimensional  case, there can be 
vertex regions of degree one that do not contain a vertex of the triangulation.

\begin {defi} 
A pattern $P$ is said to be almost normal if the intersection of $P$ with the boundary of every $3$-simplex  consists of $3$-tracks and $4$-tracks apart from exactly one $3$--simplex 
whose boundary  intersects $P$  in $3$-tracks and exactly one $8$-track.  A patterned surface is almost normal if it 
corresponds to an almost normal pattern.  
\end {defi}

\begin {theo} The manifold $M$ is a $3$-sphere if an only if every vertex region corresponding to a vertex of $D_P$ of degree one contains a vertex of $K$ or an almost normal $2$-sphere.
\end {theo}

This result is due to Hyam Rubinstein \cite {[R97]}. We are reworking the proof by Abigail Thompson \cite {[T94]}.

\begin {proof}

Let $M$ be a triangulated $3$-sphere..   Let $M_0$ be a component, obtained by cutting along the maximal collection of normal $2$-spheres, which has one boundary component  and which does not contain a vertex. Then $M$ is a $3$-sphere if and only if all such regions are $3$-balls.
The intersection of $M_0$ with a $2$-simplex will be the unshaded region  as in one of the two cases of Figure 7,  depending on whether the central region is in $M_0$.

There is isotopy between the boundary and the constant map.

 We can break the isotopy up into finitely many steps in which the intersection with the $1$-skeleton changes by a pair of points.



A  returning arc in a $2$-manifold is shown in Figure 4.  In a $3$-manifold there will be more than two $2$-simplexes with the same edge.  However one of these two pictures of Figure 4 will occur in each pair of $2$-simplexes one of which is  the  $2$-simplex containing the returning edge and the other $2$-simplex is  any one  of the other $2$-simplexes containing that edge.  It will be an isotopy of the surface.

 In fact every isotopy move  is the removal or addition of a {\it removable pair.}  We have seen the ends of a returning arc  are a removable pair.  Two adjacent  points  belonging to the same edge in an $n$-track in a $3$-simplex are also a removable pair.   Such pairs  will only exist for $n \geq 8$.   A pattern is a normal pattern if and only if it has no removable pairs.

In Figure 12 see how the removal of the two right hand points of an $8$-track gives two $3$-tracks in which the opposite pair of points are no longer in the same track.
Returning arcs will be created in at least one of the other  $2$-simplexes containing  the edge $wz$.

  \begin{figure}[htbp]
\centering
\begin{tikzpicture}[scale=.5]
  \draw  (0,0) -- (0,4) ;
  \draw  (4,0) -- (4,4)--(0,4) ;
   \draw  (0,0) -- (4,4) ;
  \draw  (4,0)--(0,0) ;

 \draw  (6,0)--(6,4) --(10,0)--(10,4) ;

\draw [blue ] (0,.5) --(.5,0) ;
\draw [red]  (6,2.2) --(10,1.8) ;

\draw [red] (6,1.8)--(8,0) ;
\draw [red] (6.8, .9)--(6.7,1.2)--(7, 1.1);
\draw [red] (8,4)--(10,2.2) ;

\draw [red] (9.1, 2.8)--(9,3.1)--(9.3, 3.05) ;
\draw [red] (1.1, 3.4)--(1,3.1)--(1.35,3.15);
\draw [red] (0,2.2) -- (2, 4) ; 
\draw  [red] (2, 0) -- (4, 1.8) ;

\draw [red] (0,1.8)--(4,2.2) ;
\draw  [red] (1.6,2.15)--(1.9, 2)--(1.6,1.8) ;
\draw  [red] (7.6,2.2)--(7.85, 2)--(7.6,1.85) ;
\draw (6,0) --(10,0) ;
\draw  [red] (3.1, 1.15)--(3,.9)--(3.3, .9) ;

\draw (6,4) --(10,4) ;
\draw  [left] (0,0) node {$_u$} ;
\draw  [left] (6,0) node {$_u$} ;
\draw  [left] (0,4) node {$_v$} ;
\draw  [left] (6,4) node {$_v$} ;


\draw  [right ] (4,0) node {$_w$} ;
\draw  [right ] (10,0) node {$_w$} ;
\draw  [right ] (4,4) node {$_z$} ;

\draw  [right ] (10,4) node {$_z$} ;

\draw [blue] (0,3)--(1,4);
\draw [blue] (0,3.5)--(.5,4);
\draw [blue] (0,2.5)--(1.5,4);
\draw [blue] (4,3)--(3,4);
\draw [blue] (10,3)--(9,4);
\draw [blue] (6,3)--(7,4);
\draw [blue] (6,3.5)--(6.5,4);
\draw [blue] (6,2.5)--(7.5,4);



  \draw  (15,0) -- (15,4) ;
  \draw  (19,0) -- (19,4)--(15,4) ;
   \draw  (15,0) -- (19,4) ;
  \draw  (19,0)--(15,0) ;

\draw [red] (16.1, 3.4)--(16,3.1)--(16.35,3.15);



\draw [red] (21,1.8)--(23,0) ;

\draw [red] (23,4)--(21,2.2) ;
\draw [red] (21.7, 3)--(22,3.1)--(21.9,2.8);

\draw [red] (15,2.2) -- (17, 4) ; 
\draw  [red] (15, 1.8) -- (17, 0) ; 

\draw [red] (15.7, 1)--(16,.9)--(15.9, 1.2);

\draw (21,0) --(25,0) ;
\draw (21,0) --(21,4) ;

\draw (21,4)--(25,0) --(25,4) ;

\draw [red] (21.8, .9)--(21.7,1.2)--(22, 1.1);

\draw (21,4) --(25,4) ;

\draw  [left] (15,0) node {$_u$} ;
\draw  [left] (21,0) node {$_u$} ;
\draw  [left] (15,4) node {$_v$} ;
\draw  [left] (21,4) node {$_v$} ;

\draw  [right ] (19,0) node {$_w$} ;
\draw  [right ] (25,0) node {$_w$} ;
\draw  [right ] (19,4) node {$_z$} ;

\draw  [right ] (25,4) node {$_z$} ;


\draw [blue] (0,1)--(1,0);
\draw [blue] (6,1)--(7,0);
\draw [blue] (6,.5)--(6.5,0);

\draw [blue] (15,3)--(16,4);
\draw [blue] (15,3.5)--(15.5,4);
\draw [blue] (15,2.5)--(16.5,4);
\draw [blue] (19,3)--(18,4);
\draw [blue] (25,3)--(24,4);

\draw [blue] (16,0)--(15,1) ;
\draw [blue] (22,0)--(21,1);
\draw [blue] (21.5,0)--(21,.5) ;
\draw [blue] (15.5,0)--(15,.5) ;
\draw [blue] (21,3)--(22,4);
\draw [blue] (21,3.5)--(21.5,4);
\draw [blue] (21,2.5)--(22.5,4);

\end{tikzpicture}

\caption {\label2}

\end{figure}

The situation in a $3$-sphere is different from that of a $2$-sphere in that there will be vertices of $D_P$ of degree one that do not contain a vertex of the triangulation.

 There will be an isotopy from the boundary to the constant map.

 Let $f :S^2 \rightarrow M$ be an injective map whose  image is the  normal $2$-sphere $\delta M_0$.
 Let $F : S^2  \times I \rightarrow M_0$
be a isootopy between $f $ and a constant map. For $t \in  I$ let $f_t : S^2 \rightarrow M_0,  f_t(s) = F(s,t)$.
 We can assume that for all but finitely many values $t, f_t$ meets the $1$-skeleton $T^1$
of the triangulation transversely and for each $t$ for which the map $f_t $ does not meet $T^1$
transversely, there is precisely one point where $f_t$ is tangential to $T^1$.
Let $t_1',t_2',\dots ,t_n'$
be the values of $t $ for which $f_t$ does not meet $T^1$
transversely and
put $t_0 = 0, t_{n+1}=1$. For $i = 1, . . . , n $ choose $t_i  \in (t_i',t_{i+1}')$
and put $f_i = f_{t_i}$.

Let $W_i$ be the set of intersections of $f_i(S^2)$
with the $1$-skeleton of $T$ and let $w_i = |W_i|$.
Rearrange the weights $w_i$
into a finite non-increasing sequence. Order these sequences
lexicographically. The width of $T$ is the minimum sequence of weights as $F$ ranges over
all possible isotopies.
If F realises the width of $T$, then $F$ is said to be in thin position. Now revert to the
original ordering of the $w_i$'s.
 Clearly for each $i$ either $w_{i+1} = w_i + 2$ or $w_{i+1} = w_i-2$. Note
that $w_0$   is the number of intersections of $\delta M_0$ with 
the $1$-skeleton. For $F$ in thin position, we are particularly interested in the values  $ i$ for which $w_{i-1}= w_{i+1} = w_i-2$. For
such an $i, f_i(S^2)$ is called a thick sphere, while if $w_{i-1} = w_{i+1} = w_i + 2$, then $f_i(S^2)$
is
called a thin sphere. Our interest will be in the first thick sphere.  Since there are no removable pairs in a normal surface, $w_1 = w_0 +2$.
Let $S = f_k(S^2)$ be the first thick sphere.

We know that the isootopy going  from $f_k$ to $f_{k+1}$ results in the deletion of a removable pair  in a $1$-simplex $ \gamma $ and the isootopy going from $f_k$ to $f_{k-1}$ results in the removal of another pair. 
 Both pairs
 must belong to the same $3$-simplex  and the same track of intersection with its boundary, for if there is no  such track that contains both pairs, so that one pair is in one track and the other removable pair is in another track, then the order of the isotopies can be changed and   the first peak occurs earlier.  Also if both pairs are in the same track and  one pair is not separated by the removal of the other pair,  then we can also swap the isotopies around and get a lower first peak.  It follows that the exceptional track has two removable pairs and that removing one pair disconnects the track..

The $8$-track is the only track in a tetrahedron with these properties.   

It is also the case that no removable pair  can occur anywhere else in $S_k$,  because if it did, it could be removed before doing the $k$-th step,  then put back after  the $(k+1)$-step,
again giving a lower first peak.   This means that apart from the exceptional $8$-track, $S_k$ intersects  the boundary of each $3$-simplex in $3$-tracks and $4$-tracks, making $S_k$ an almost  normal $2$-sphere.   

For the right choice of  removable pair in the $8$-track of $S_k$,  removing it gives $S_{k-1}$.   Then successively removing the ends of returning arcs by a sequence of isotopies
one ends up with a normal surface parallel to the boundary $2$-sphere $S_0$.
Choosing the other removable pair in the $8$-piece and then removing returning arcs will eventually result in the removal of all points.   Thus the isotopy sequence has just one thick 
sphere.

We have shown that $M$ is the triangulation of a $3$-sphere if and only if in the tree $D_P$ corresponding to a maximal pattern of tracks corresponding to normal $2$-spheres 
every vertex region belonging to a vertex of  $D_P$ of degree one contains a vertex of the triangulation or an almost normal track.
\end {proof}

\end {section}
\begin {section} {Spatterns and Stracks}
We now introduce a more general terminology appropriate to working with homotopies rather than isotopies.   
Many of the definitions and arguments given for injective maps of $2$-spheres into a $3$-manifold still work for piecewise linear maps.
 Let $M$ be a $3$-manifold.
Let $f : S^2\rightarrow M$  be a  general position map (see Hempel \cite {[JH76]}, Chapter 1), in which $f$ is in general position with respect to a triangulation $K$ of $M$.   An $i$-piece of $f$ is defined to be a component of $f^{-1}(\sigma)$ where $\sigma$ is an $(i+1)$-simplex of $K$. Thus a  $0$-piece is a point of $S^2$. A $1$-piece is either an scc (simple closed curve) or an arc joining two $0$-pieces.  If there are no $1$-pieces that are scc's, then each $2$-piece  has boundary that is is a union of $1$-pieces.
 One can use surgery along simple closed curves to change $f$ to a map in which there are no $1$-pieces that are scc's,  and in which every $2$-piece is a disc.
The $2$-pieces will then give  a cell decomposition (tessellation) of the $2$-sphere.   If $R$ is a $1$-piece with end points $u,v$ whose images under $f$ are in the same $1$-simplex, then,  the restriction of $f$ to $R$ is called a returning arc.

A  {\it spattern } $sP$ in $K$ is defined to be a subset of $|K|$ satisfying

\begin{itemize}

\item [(i)]  For each $2$-simplex $\sigma $ of $K$,  $sP\cap |\sigma|  $ is a union of finitely many straight lines joining distinct faces of $\sigma$.

\item [(ii)] For each $1$-simplex $\gamma$ of $K$, $sP\cap |\gamma | $ consists of finitely many points in the interior of $|\gamma |$. Each such point  belongs to exactly one straight line in each of the $2$-simplexes containing $\gamma$.

\end {itemize}

A {\it strack} is a spattern that has no proper subspatterns.   Every spattern is a union of finitely many stracks.  A strack in $T$, the tetrahedron boundary of a $3$-simplex,  is the image of a circle.
If $M$ is a $3$-manifold and $M$ is triangulated so that $M=|K|$ where $K$ is a $3$-complex,   then a spattern $sP$ in $|K^2|$ determines a {\it spatterned surface} $S$  obtained by attaching a singular disc to each strack in the boundary of each $3$-simplex $\rho $.

The image of a map $f \rightarrow M$ can be straightened to intersect the $2$-skeleton $K^2$  non-trivially in a spattern if and only if there are no $1$-pieces that are scc's or returning arcs.  
 
 Let a strack be the image $s$  of  a map $f : S^1 \rightarrow K$ which lies in a spatterned $2$-sphere.  Here $s$ will be in the boundary of a $2$-piece in a $3$-simplex $\rho $.
 Let $p,q$ be points in $\gamma \cap s$,   then $p,q$ are a removable pair if and only if starting from $p$ and proceeding along $s$ in one of the two directions possible,  $q$ is the first return to a point of $\gamma$ and this return is in ths same $2$-simplex that $s$ left $p$.  Thus in the strack shown in the left hand diagram of  Figure 13  the points in $uv$ and $wz$ are removable  but those in $vz$ and $uw$ are not removable.   The right hand diagram shows the effect of removing the pair in $wz$.   The points in $uv$ are now in different stracks.

  \begin{figure}[htbp]
\centering
\begin{tikzpicture}[scale=.6]
  \draw  (0,0) -- (0,4) ;
  \draw  (4,0) -- (4,4)--(0,4) ;
   \draw  (0,0) -- (4,4) ;
  \draw  (4,0)--(0,0) ;
  
\draw   (0,0) node {$\bullet $} ;
\draw   (4,0) node {$\bullet $} ;	
\draw  (0,4) node {$\bullet $} ;
\draw  (4,4) node {$\bullet $} ;

 \draw  (6,0)--(6,4) --(10,0)--(10,4) ; 

\draw   (6,0) node {$\bullet $} ;
\draw   (10,0) node {$\bullet $} ;
\draw  (6,4) node {$\bullet $} ;
\draw  (10,4) node {$\bullet $} ;


\draw [red]  (6,2.2) --(10,1.8) ;

\draw [red] (6,1.8)--(9,0) ;

\draw [red] (6.9, 1.1)--(6.8,1.3)--(7.2, 1.3);

\draw [red] (21.9, 1.1)--(21.8,1.3)--(22.2, 1.3);

\draw [red] (7.4, 1.4)--(7.5,1.6)--(7.8, 1.5);

\draw [red] (22.4, 1.4)--(22.5,1.6)--(22.8, 1.5);

\draw [red] (16.7, 1.1)--(16.9,1.1)--(16.9,1.3);

\draw [red] (8,4)--(10,2.2) ;
\draw [red] (1.7, 1.1)--(1.9,1.1)--(1.9,1.3);
\draw [red] (9.1, 2.8)--(9,3.1)--(9.3, 3.05) ;
\draw [red] (1, 3.5)--(1,3.1)--(1.35,3.15);
\draw [red] (0,2.2) -- (2, 4) ; 
\draw  [red] (2, 0) -- (4, 1.8) ;

\draw [red] (0,1.8)--(4,2.2) ;
\draw  [red] (1.6,2.15)--(1.9, 2)--(1.6,1.8) ;

\draw  [red] (7.6,2.2)--(7.85, 2)--(7.6,1.85) ;
\draw (6,0) --(10,0) ;
\draw  [red] (3.1, 1.15)--(3,.9)--(3.3, .9) ;

\draw (6,4) --(10,4) ;


\draw  [left] (0,0) node {$_u$} ;
\draw  [left] (6,0) node {$_u$} ;
\draw  [left] (0,4) node {$_v$} ;
\draw  [left] (6,4) node {$_v$} ;

\draw [red]  (16,4)--(16.5,1.5)--(18,0) ;
\draw [red]  (1,4)--(1.5,1.5)--(3,0) ;
\draw [red]  (7,4)--(7.1,2.9)--(8,0) ;

\draw [red]  (22,4)--(22.1,2.9)--(23,0) ;

\draw  [right ] (4,0) node {$_w$} ;
\draw  [right ] (10,0) node {$_w$} ;
\draw  [right ] (4,4) node {$_z$} ;

\draw  [right ] (10,4) node {$_z$} ;



  \draw  (15,0) -- (15,4) ;
  \draw  (19,0) -- (19,4)--(15,4) ;
   \draw  (15,0) -- (19,4) ;
  \draw  (19,0)--(15,0) ;
  
\draw   (15,0) node {$\bullet $} ;
\draw   (19,0) node {$\bullet $} ;	
\draw  (15,4) node {$\bullet $} ;
\draw  (19,4) node {$\bullet $};

\draw   (21,0) node {$\bullet $} ;
\draw   (25,0) node {$\bullet $} ;
\draw  (21,4) node {$\bullet $} ;
\draw  (25,4) node {$\bullet $} ;

\draw [blue] (16.1, 3.4)--(16,3.1)--(16.35,3.15);

\draw [red] (21,1.8)--(24,0) ;
\draw [blue] (23,4)--(21,2.2) ;
\draw [blue] (21.7, 3)--(22,3.1)--(21.9,2.8);

\draw [blue] (15,2.2) -- (17, 4) ; 
\draw  [red] (15, 1.8) -- (17, 0) ; 

\draw [red] (15.7, 1)--(16,.9)--(15.9, 1.2);

\draw (21,0) --(25,0) ;
\draw (21,0) --(21,4) ;

\draw (21,4)--(25,0) --(25,4) ;


\draw (21,4) --(25,4) ;

\draw  [left] (15,0) node {$_u$} ;
\draw  [left] (21,0) node {$_u$} ;
\draw  [left] (15,4) node {$_v$} ;
\draw  [left] (21,4) node {$_v$} ;

\draw  [right ] (19,0) node {$_w$} ;
\draw  [right ] (25,0) node {$_w$} ;
\draw  [right ] (19,4) node {$_z$} ;

\draw  [right ] (25,4) node {$_z$} ;


\end{tikzpicture}

\caption {\label 2}

\end{figure}

If $S$ is a spattern in a  $2$-complex $K$, then there is a uniquely determined underlying pattern $P$ that has the same intersection with the $1$-skeleton of $K$.
Put $W = S\cap |K^1| = P\cap |K^1|$.

Figure 14 shows a spattern in $T$ that is a union of a red 12-track and a blue 3-track, and its underlying pattern, which is also a $3$-track and a $12$-track.

 \begin{figure}[htbp]
\centering
\begin{tikzpicture}[scale=.6]


  \draw  (0,0) -- (0,4) ;
  \draw  (4,0) -- (4,4)--(0,4) ;
   \draw  (0,0) -- (4,4) ;
  \draw  (4,0)--(0,0) ;
  
\draw   (0,0) node {$\bullet $} ;
\draw   (4,0) node {$\bullet $} ;	
\draw  (0,4) node {$\bullet $} ;
\draw  (4,4) node {$\bullet $} ;

 \draw  (6,0)--(6,4) --(10,0)--(10,4) ; 

\draw   (6,0) node {$\bullet $} ;
\draw   (10,0) node {$\bullet $} ;
\draw  (6,4) node {$\bullet $} ;
\draw  (10,4) node {$\bullet $} ;

\draw [blue]  (3,0) -- (0,3.3) ;
\draw [blue]  (9,0) -- (6,3.3) ;

\draw [red] (6,1.8)--(8,0) ;

\draw [red] (8,4)--(10,2.2) ;


\draw [red] (6,2.2)--(10,2) ;
\draw [red] (6,2)--(10,1.8) ;

\draw [red] (0,2.2) -- (2, 4) ; 
\draw  [red] (2, 0) -- (4, 1.8) ; 

\draw [red] (0,2)--(4,2.2) ;

\draw [red] (0,1.8)--(4,2) ;

\draw (6,0) --(10,0) ;

\draw (6,4) --(10,4) ;

\draw  [left] (0,0) node {$u$} ;
\draw  [left] (6,0) node {$u$} ;
\draw  [left] (0,4) node {$v$} ;
\draw  [left] (6,4) node {$v$} ;

\draw  [right ] (4,0) node {$w$} ;
\draw  [right ] (10,0) node {$w$} ;
\draw  [right ] (4,4) node {$z$} ;

\draw  [right ] (10,4) node {$z$} ;


  \draw  (12,0) -- (12,4) ;
  \draw  (16,0) -- (16,4)--(12,4) ;
   \draw  (12,0) -- (16,4) ;
  \draw  (16,0)--(12,0) ;
  
\draw   (12,0) node {$\bullet $} ;
\draw   (16,0) node {$\bullet $} ;	
\draw  (12,4) node {$\bullet $} ;
\draw  (16,4) node {$\bullet $} ;

 \draw  (18,0)--(18,4) --(22,0)--(22,4) ; 

\draw   (18,0) node {$\bullet $} ;
\draw   (22,0) node {$\bullet $} ;
\draw  (18,4) node {$\bullet $} ;
\draw  (22,4) node {$\bullet $} ;

\draw [blue]  (14,0) --(13.6,1.6)--(12,1.8) ;
\draw [blue]  (20,0) --(18,1.8) ;

\draw [red] (18,2)--(21,0) ;

\draw [red] (20,4)--(22,2.2) ;


\draw [red] (18,3)--(19.76, 2.24)--(22,2) ;
\draw [red] (18,2.2)--(22,1.8) ;

\draw [red] (12,3) -- (14, 4) ; 
\draw  [red] (15, 0) -- (16, 1.8) ; 

\draw [red] (12,2.2)--(16,2.2) ;

\draw [red] (12,2)--(16,2) ;

\draw (18,0) --(22,0) ;

\draw (18,4) --(22,4) ;

\draw  [left] (12,0) node {$u$} ;
\draw  [left] (18,0) node {$u$} ;
\draw  [left] (12,4) node {$v$} ;
\draw  [left] (18,4) node {$v$} ;

\draw  [right ] (16,0) node {$w$} ;
\draw  [right ] (22,0) node {$w$} ;
\draw  [right ] (16,4) node {$z$} ;

\draw  [right ] (22,4) node {$z$} ;

\end{tikzpicture}

\caption {\label 2}
\end{figure}

Given a finite subset $F$ of the closed interval $I =[0, 1]$ and a permutation $\beta $ of $F$ there is a continuous map $\phi _I  : I \rightarrow I$ which restricts to $\beta $ on $F$ and to the identity on $\{ 0, 1\}$.  There is a homotopy between $\phi _I$ and the identity map.

Building up from such maps,  if $\nu : W \rightarrow W$ is a permutation that restricts to a permutation on $W \cap |\gamma|$
for each $1$-simplex $\gamma$,  then $\nu $ extends to a map of  the $1$-skeleton into itself which restricts to the identity on the $0$-skeleton and which is homotopic to the identity map
on $K^{(1)}$.   This map can be further extended linearly to a map of the $2$-skeleton and then, in the case of a triangulated $3$-manifold,  to a continuous map $\nu : M\rightarrow M$  which is homotopic to the identity map on $M$.  It will have the property 
that if two points on the boundary of a $2$-simplex $\sigma$ are joined by a line in $S\cap \sigma$, then they are joined by a line in $\nu S \cap \sigma$.  A spattern $S$ in $K$ is mapped to another spattern.     Lines that were uncrossed may become crossed, and lines that were crossed may become uncrossed.   The underlying pattern $P$ is not changed.
and the spatterned surface (tessellation) corresponding to the spattern is also unchanged.   A homotopy like this is called an {\it edge homotopy}.

Our proof of the Poincar\' e Conjecture is to show that a certain spattern must occur in a homotopy from the boundary of a fake ball to a constant map, and this spattern is homotopic to 
its underlying pattern by edge homotopies.


\end {section}
\begin {section} {The Poincar\' e Conjecture}
\begin {theo} A simply connected, compact $3$-manifold with no boundary is a $3$-sphere
\end {theo}
\begin {proof}
We follow the proof of the Recognition Algorithm  until the part where it is shown that an isotopy between the boundary of $M_0$  and the constant map means that $M_0$ contains an almost normal $2$-sphere.   We now have to prove that this is the case under the weaker assumption that there is a homotopy between the boundary map and the constant map.

As before
let $f :S^2 \rightarrow M$ be an injective map whose  image is the  normal $2$-sphere $\delta M_0$.
 Let $F : S^2  \times I \rightarrow M_0$
be a homotopy between $f $ and a constant map. For $t \in  I$ let $f_t : S^2 \rightarrow M_0,  f_t(s) = F(s,t)$.
 We can assume that for all but finitely many values $t, f_t$ meets the $1$-skeleton $T^1$
of the triangulation transversely and for each $t$ for which the map $f_t $ does not meet $T^1$
transversely, there is precisely one point where $f_t$ is tangential to $T^1$.
Let $t_1',t_2',\dots ,t_n'$
be the values of $t $ for which $f_t$ does not meet $T^1$
transversely and
put $t_0 = 0, t_{n+1}=1$. For $i = 1, . . . , n $ choose $t_i  \in (t_i',t_{i+1}')$
and put $f_i = f_{t_i}$.

Recall the definition of a removable pair.
Let $\gamma $ be a $1$-simplex of $M$.  Two points $p,q \in \gamma \cap f(S^2)$ are said to be {\it }removable if there is a homotopy from  $f$  to a map $f' : S^2 \rightarrow M$ such that $f(x)=f'(x)$ for every 
$x$ that is not in the interior of a simplex with $ \gamma $ as a face and $\gamma \cap f'(S^2)$ is the same as  $\gamma \cap f(S^2)$ but with $p,q$ removed.
Each step of $F$ is the removal or addition of a removable pair.

Let $W_i$ be the set of intersections of $f_i(S^2)$
with the $1$-skeleton of $T$ and let $w_i = |W_i|$.
Rearrange the weights $w_i$
into a finite non-increasing sequence. Order these sequences
lexicographically. The width of $T$ is the minimum sequence of weights as $F$ ranges over
all possible homotopies.

If F realises the width of $T$, then $F$ is said to be in thin position. Now revert to the
original ordering of the $w_i$'s.
 Clearly for each $i$ either $w_{i+1} = w_i + 2$ or $w_{i+1} = w_i-2$. Note
that $w_0$   is the number of intersections of $\delta M_0$ with 
the $1$-skeleton. For $F$ in thin position, we are particularly interested in the values  $ i$ for which $w_{i-1}= w_{i+1} = w_i-2$. For
such an $i, f_i(S^2)$ is called a thick sphere, while if $w_{i-1} = w_{i+1} = w_i + 2$, then $f_i(S^2)$
is
called a thin sphere. Our interest will be in the first thick sphere.  Since there are no removable pairs in a normal surface, $w_1 = w_0 +2$.
Let $S = f_k(S^2)$ be the first thick sphere.

We know that the homotopy going  from $f_k$ to $f_{k+1}$ results in the deletion of a removable pair  in a $1$-simplex $ \gamma $ and the homotopy going from $f_k$ to $f_{k-1}$ results in the removal of another pair. 
 Both pairs
 must belong to the same $3$-simplex  and the same strack of intersection with its boundary, for if there is no  such strack that contains both pairs, so that one pair is in one strack and the other removable pair is in another strack, then the order of the homotopies can be changed and   the first peak occurs earlier.  Also if both pairs are in the same strack and  one pair is not separated by the removal of the other pair,  then we can also swap the homotopies around and get a lower first peak.  It follows that the exceptional strack has two removable pairs and that removing one pair disconnects the strack..

As we have seen in Figure 13, there can be more than one strack with these properties.

It is also the case that no removable pair  can occur anywhere else in $S_k$,  because if it did, it could be removed before doing the $(k-1)$-th step,  then put back after  the $(k+1)$-step,
again giving a lower first peak.

Let $S_0$ be the normal $2$-sphere that is the boundary of $M_0$.    Now a normal pattern contains
no removable pair and so $S_1$ has two extra vertices $u_1,v_1$.

Now $W_0$ is the set of  vertices of the normal $2$-sphere $S_0 = \delta M_0$.
For $i=1,2,\dots , k ,\ \ \ \ W_i= W_{i-1}\cup \{ u_i,v_i\}$  where $u_i,$ and $v_i$ are points of the $1$-simplex $\gamma _i$ which are joined by a returning arc in a $2$-simplex with edge $\gamma _i$.

It is clear then that $S=f_k(S^2)$ has no returning arcs.  Also it can be assumed that it has no $1$-pieces that are scc's.    For if there was a $1$-piece that was an scc then surgery along this curve would produce two $2$-spheres,  one of which will contain the exceptional $2$-piece.   If this $2$-sphere $S'$ had fewer intersections with the $1$-skeleton,  i.e. $S'\cap K^1$ is a proper subset of $W =W_k$,  then it would contradict the choice  of $F$.   Thus $S' \cap K^1 =W$  and the other $2$-sphere will not intersect the $1$-skeleton.   All the $1$-pieces for this $2$-sphere must be scc's,  and  these can be removed from $f$ by surgeries.   Using a similar argument one can do surgery along scc's to change every $2$-piece in $S$ that is not a disc to one that is a disc.    

We can assume, then,  that $S$  is a spatterned surface.  Let $P$ be the underlying patterned surface.  It will be shown that there is a permutation of $W$,
restricting to a permutation on each intersection with a $1$-simplex for which the corresponding homotopy changes $S$ to $P$.

As a spattern is determined by its intersections with $2$-simplexes, we will consider what happens to a single $2$-simplex in the homotopy sequence.
Initially the $2$-simplex will intersect $S_0$ as in one of the diagrams of Figure 7.  The unshaded parts will be the intersection with $M_0$.   Note that since $M_0$ contains no 
vertices, each vertex is contained in a shaded region.  The intersection with $S$ will consist of the intersection with $S_0$ together  with straight lines   joining edges in the unshaded regions.  The extra intersection points with
an edge  are paired - each pair lying in an unshaded component.   

If our sequence of homotopies was a sequence of isotopies, then each isotopy would result  in an increase in shaded area.   For each $2$-simplex,   a move in which an unshaded central area   becomes shaded can occur at most once.
If this region is initially shaded then, obviously, no such move can occur.    
The other move that can occur is adding a shaded region to an unshaded region as shown in Figure 17.

It will be shown that our sequence of homotopies can be converted to a sequence of isotiopies by using the homotopies corresponding to permutations of the points of $W$ on a
$1$-simplex.

Consider now the last removal of a returning arc before $S_0$ is reached.
The only way one can end up with a normal surface with no returning arcs is for there to be a $2$-simplex as in the left hand picture of Figure 15 with a pair of points in the top left hand edge joined by a returning arc in another $2$-simplex containing that edge,  which then becomes the right hand picture on removing the returning arc.

\begin{figure}[htbp]
\centering
\begin{tikzpicture}[scale=1.2]
\fill [pink]  (4,0) --(3,0)--(3.5,.5)--(4,0);
\fill [pink] (0,0)--(.45,.45)--(.9,0) --(0,0);
\fill [pink]  (2,2)--(2.5,1.5)--(1.5,1.5)--(2,2);
\fill [pink] (2.7,1.3)--(3,1) --(2.7,0)--(2.2,0)--(2.7,1.3); ;
\fill [pink]  (1,1)--(1.6,0)--(1.2,0)--(.65,.65)--(1,1););

\fill [pink]  (10,0) --(9.2,0)--(9.4,.6)--(10,0);
\fill [pink] (6,0)--(6.45,.45)--(6.9,0) --(6,0);
\fill [pink]  (8,2)--(8.5,1.5)--(7.5,1.5)--(8,2);
\fill [pink]  (7,1)--(7.6,0)--(7.2,0)--(6.65,.65)--(7,1);

\draw [red]  (4,0) --(3,0)--(3.5,.5)--(4,0);
\draw[red] (0,0)--(.45,.45)--(.9,0) --(0,0);
\draw[red]  (2,2)--(2.5,1.5)--(1.5,1.5)--(2,2);
\draw[red] (2.7,1.3)--(3,1) --(2.7,0)--(2.2,0)--(2.7,1.3); ;
\draw[red]  (1,1)--(1.6,0)--(1.2,0)--(.65,.65)--(1,1););

\draw[red]  (10,0) --(9.2,0)--(9.4,.6)--(10,0);
\draw[red](6,0)--(6.45,.45)--(6.9,0) --(6,0);
\draw[red]  (8,2)--(8.5,1.5)--(7.5,1.5)--(8,2);
\draw[red]  (7,1)--(7.6,0)--(7.2,0)--(6.65,.65)--(7,1);
\draw[red](8.2,0)--(8.7,0)--(9,1)--(8.7,1.3)--(8.2,0);

\fill [pink](8.2,0)--(8.7,0)--(9,1)--(8.7,1.3)--(8.2,0);

\draw (0,0) -- (4,0)--(2,2) -- (0,0) ;
\draw (6,0)--(10,0)--(8,2)--(6,0) ;



\draw [red] (1.5,1.5)--(2.5,1.5) ;
\draw [red] (7.5,1.5)--(8.5,1.5) ;

\draw [red] (2.2,0)--(1.4,1.4) ;
\draw [red] (1.1,1.1)--(2.7,1.3) ;
\draw [red] (.65,.65)--(1.2,0) ;
\draw [red] (8.7,0)--(9,1) ;

\draw [red] (3,0)--(3.5,.5);
\draw [red] (.9,0)--(.45,.45);
\draw [red] (2.7,0)--(3,1);

\draw [red] (9.4,.6)--(9.2,0);
\draw [red] (6.9,0)--(6.45,.45);



\draw [red] (1.4,1.4) arc (45:225:.2) -- cycle ;

\end{tikzpicture}

\caption {\label 2}

\end{figure}

In $S_1$ the pair $u_1,v_1$ will be the ends of a returning arc in at least one $2$-simplex.   In another $2$-simplex $u_1,v_1$ will be  joined by lines in both $S_1$ and $S$ to the vertices of an edge in $S_0$.   Having such a situation in a $2$-simplex is the only way that removing $u_1, v_1$ will give a normal pattern.  Thus we are in the situation of Figure 16, 
with $i =1$, rather than Figure 17.
In the sequence of homotopies the lines joining $u_1,v_1$  to the ends $w,z$ of an edge may cross.  They can be uncrossed by transposing $u_1$ and $v_1$.
We need $\nu _1$ to do more than this.   We permute the points of $W$ that lie in the interval $[p,q]$  so that there are no points in the open interval $(u_1,v_1)$ and there are 
no lines that cross the lines from either $u_1$ or $v_1$. This will happen if the points in $W\cap [p,q]$ are permuted so that the ones joined to a point on the bottom edge are in
$[p,u_1]$ and  the points in $[q,v_1]$ are joined to points in the right hand edge.  The lines  $u_1w$ and $v_1z$ will now be lines in $P$.   This is illustrated in Figure 16(b).
If the points of $S$ in the interval $pq$ are labelled $(1,2,3,4,5)$ then $\nu _i$ takes the points to $(1,4,5,2,3)$.

\begin{figure}[htbp]
\centering
\begin{tikzpicture}[scale=1.5]
\draw [red] (1.4,1.4) arc (45:225:.2) -- cycle ;
\fill [pink]  (4,0) --(3,0)--(3.5,.5)--(4,0);
\fill [pink] (0,0)--(.45,.45)--(.9,0) --(0,0);
\fill [pink]  (2,2)--(2.5,1.5)--(1.5,1.5)--(2,2);
\fill [pink] (2.7,1.3)--(3,1) --(2.7,0)--(2.2,0)--(2.7,1.3); ;
\fill [pink]  (1,1)--(1.6,0)--(1.2,0)--(.65,.65)--(1,1););

\fill [pink]  (10,0) --(9.2,0)--(9.4,.6)--(10,0);
\fill [pink] (6,0)--(6.45,.45)--(6.9,0) --(6,0);
\fill [pink]  (8,2)--(8.5,1.5)--(7.5,1.5)--(8,2);
\fill [pink]  (7,1)--(7.6,0)--(7.2,0)--(6.65,.65)--(7,1);
\fill [pink] (8.2,0)--(8.7,0)--(9,1)--(8.7,1.3)--(7.3,1.3) -- (7.2,1.2)--(8.2,0);
\fill [pink] (7.3,1.3) arc (45:225:.08) -- cycle ;
\draw [red]  (4,0) --(3,0)--(3.5,.5)--(4,0);
\draw[red] (0,0)--(.45,.45)--(.9,0) --(0,0);
\draw[red]  (2,2)--(2.5,1.5)--(1.5,1.5)--(2,2);
\draw[red] (2.7,1.3)--(3,1) --(2.7,0)--(2.2,0)--(2.7,1.3); ;
\draw[red]  (1,1)--(1.6,0)--(1.2,0)--(.65,.65)--(1,1););

\draw [red] (7.3,1.3) arc (45:225:.08) -- cycle ;

\draw[red]  (10,0) --(9.2,0)--(9.4,.6)--(10,0);
\draw[red](6,0)--(6.45,.45)--(6.9,0) --(6,0);
\draw[red]  (8,2)--(8.5,1.5)--(7.5,1.5)--(8,2);
\draw[red]  (7,1)--(7.6,0)--(7.2,0)--(6.65,.65)--(7,1);
\draw[red](8.2,0)--(8.7,0)--(9,1)--(8.7,1.3)--(7.3,1.3) -- (7.2,1.2)--(8.2,0);

\draw (0,0) -- (4,0)--(2,2) -- (0,0) ;
\draw (6,0)--(10,0)--(8,2)--(6,0) ;



\draw [red] (1.5,1.5)--(2.5,1.5) ;
\draw [red] (7.5,1.5)--(8.5,1.5) ;

\draw [red] (2.2,0)--(1.4,1.4) ;
\draw [red] (1.1,1.1)--(2.7,1.3) ;
\draw [red] (.65,.65)--(1.2,0) ;
\draw [red] (8.7,0)--(9,1) ;

\draw [red] (3,0)--(3.5,.5);
\draw [red] (.9,0)--(.45,.45);
\draw [red] (2.7,0)--(3,1);

\draw [red] (9.4,.6)--(9.2,0);
\draw [red] (6.9,0)--(6.45,.45);
\draw [above] (7.25,1.25) node {$_{v_i}$};
\draw [above] (7.1,1.1) node {$_{u_i}$};
\draw [below] (2.2,0) node {$_w$};
\draw [below] (8.2,0) node {$_w$};
\draw [above] (2.75,1.25) node {$_z$};
\draw [above] (8.75,1.25) node {$_z$};

\draw [above] (1.45,1.45) node {$_q$};
\draw [above] (1.3,1.3) node {$_{u_i}$};
\draw [above] (1.1,1.08) node {$_{v_i}$};
\draw [above] (.925,.925) node {$_p$};

\draw [above] (7.45,1.45) node {$_q$};
\draw [above] (6.95,.95) node {$_p$};


\end{tikzpicture}


\end{figure}

\begin{figure}[htbp]
\centering
\begin{tikzpicture}[scale=1.7]

\fill [pink]  (2.5,1.5)--(2.3,1.7)--(1.7,1.7)--(1.5,1.5)--cycle;
\fill [pink] (2.7,1.3)--(2.9,1.1) --(2.5,0)--(2.2,0)--(2.7,1.3); ;
\fill [pink]  (1,1)--(1.6,0)--(1.3,0)--(.75,.75)--(1,1););

\fill [pink]  (7,1)--(7.6,0)--(7.3,0)--(6.75,.75)--(7,1);
\fill [pink] (8.2,0)--(8.5,0)--(8.9,1.1)--(8.7,1.3)--(7.3,1.3) -- (7.2,1.2)--(8.2,0);

\fill [pink]  (8.5,1.5)--(8.3,1.7)--(7.7,1.7)--(7.5,1.5)--cycle;
\fill [pink] (2.7,1.3)--(2.9,1.1) --(2.5,0)--(2.2,0)--(2.7,1.3); ;
\fill [pink]  (1,1)--(1.6,0)--(1.3,0)--(.75,.75)--(1,1););

\draw[red] (2.7,1.3)--(2.9,1.1);
\draw[red]  (1,1)--(1.6,0)--(1.3,0);
\draw [red] (.65,.65)--(1,1);
\draw[thick,red](8.7,1.3)--(7.3,1.3) -- (7.2,1.2)--(8.2,0);

\draw (2.2,1.8)--(3,1);
\draw (8.2,1.8)--(9,1);
\draw (1.8,1.8) -- (.5,.5) ;
\draw (7.8, 1.8)--(6.5,.5);
\draw (1,0)--(2.7,0);
\draw (7,0)--(8.7,0);

\draw [red] (1.5,1.5)--(2.5,1.5) ;
\draw [red] (7.5,1.5)--(8.5,1.5) ;

\draw [red] (2.2,0)--(1.4,1.4) ;
\draw [red] (1.1,1.1)--(2.7,1.3) ;
\draw [blue] (1.9,0)--(1.3,1.3);
\draw [blue] (1.05,1.05)--(2.05,0);
\draw [brown] (2.55, 1.45)--(1.2,1.2);
\draw [blue] (7.9,0)--(7.1,1.1);
\draw [blue] (8.05,0)--(7.05,1.05);
\draw [brown] (8.55, 1.45)--(7.4,1.4);

\draw [above] (7.25,1.25) node {$_{v_i}$};
\draw [above] (7.1,1.1) node {$_{u_i}$};
\draw [below] (2.2,0) node {$_w$};
\draw [below] (8.2,0) node {$_w$};
\draw [above] (2.75,1.25) node {$_z$};
\draw [above] (8.75,1.25) node {$_z$};

\draw [above] (1.45,1.45) node {$_q$};
\draw (1.35,1.45) node {$_{u_i}$};
\draw (1.05,1.15) node {$_{v_i}$};
\draw [above] (.925,.925) node {$_p$};

\draw [above] (7.45,1.45) node {$_q$};
\draw [above] (6.95,.95) node {$_p$};

\fill [white](0,0)--(.45,.45)--(.9,0) --(0,0);

\draw [thick,red] (7.3,1.3) arc (45:225:.08) -- cycle ;
\fill [pink] (7.3,1.3) arc (45:225:.08) -- cycle ;

\draw [red] (1.4,1.4) arc (45:225:.2) -- cycle ;

\draw (2,-.5) node {(a)} ;
\draw (8,-.5) node {(b)} ;

\end{tikzpicture}

\caption {\label 2}

\end{figure}

We use an induction argument for defining $\nu _i$ for $2\leq i\leq k$.

Our aim is to show that for each $1$-simplex $\gamma $ we can permute the finite set $\gamma \cap S$ in such a way that under the associated homotopies the spattterned $2$-sphere $S$ becomes the underlying patterned $2$-sphere $P$. It will then be the case that
 $F$ becomes an isotopy.  
 
  Let $\gamma _i$  be the $1$-simplex containing $u_i$ and $v_i$.
 
 Our induction hypothesis is that there are permutations $\nu _i$, $\mu _i$ of $W$ for $i=1, \dots , k$,  such that $\nu _i$ restricts to a permutation on the points of $W\cap \gamma _i$
 and is the identity on the other points of $W$,
 and so that  the homotopy associated with $\mu _i = \nu _i\mu _{i-1}$   moves the points $u_j $ and $v_j$ for $j<i$ so that  any line joining two such points is moved to a line in $P$, and so that the lines joining $u_i$ to $u_j$  and joining $v_i$ to $v_j$ are lines in $P$ and they do not cross any other line  from $\gamma _j$ to $\gamma _i$.
 
We have defined $\nu _1$.  Let $\mu _1 = \nu _1$.

 We now define $\nu _i $   and $\mu _i$.    
 
 Going from $S_k$ to $S_0$, each step apart from the first is the removal of a returning arc with the possible creation of other returning arcs.  Thus going from $S_0$ to $S_k$, each
 step apart from the last is the creation of a returning arc joining $u_i$ and $v_i$ with the possible removal of other returning arcs.   
 
  In at least one of the $2$-simplexes containing $\gamma _i$ as a face there are lines in $S$ joining $u_j$ to $u_i$ for some $j <i$ or there is a situation as in Figure 16, in which $u_i,v_i$ are joined to the vertices of an edge of $S_j$ for $j < i$.  This is because there cannot be a returning arc joining $u_i$ and $v_i$ in every $2$-simplex containing $\gamma _i$, as if there were, then $S_i$ would not be connected.  If $u_i$ and $v_i$ are joined to points $u_j$ and $v_j$ for $j<i$ then $u_j$ and $v_j$ are the ends of a returning arc in $S_l$ for every $l,   j\leq l <i$.

 In Figure 17  we  see what happens in a $2$-simplex $\sigma _i$ containing $\gamma _i$, if the lines  joining $u_i$ and $v_i$ come from the pair $u_j,v_j$.  

 We permute the points of $W$ that lie in the interval $[r,s]$  so that there are no points in the smaller interval $[u_i,v_i]$ and there are 
no lines that cross the lines from either $u_i$ or $v_i$. This will happen if the points in $W\cap [r,s]$ are permuted so that the ones joined to a point in $[x,u_j]$ are in
$[r,u_i]$ and  the the points in $[s,v_i]$ are joined to points in $[v_j,y]$.  The lines  $u_iu_j$ and $v_iv_j$ will now be lines in $P$.
This is also illustrated in Figure 17.
Note that every point in the interval $xy  $ apart from $u_j$ and $v_j$  has label bigger than $j$ and every label in  the interval $rs$ apart from $u_i$ and $v_i$ has label bigger than $i$.

A similar argument can be used if both pairs are in the central region.

At the end of the induction a connected subgraph, containing every vertex  of the $1$-skeleton of $S$, has been moved to the position it should have in $P$.  In fact we could have chosen the lines of any $2$-simplex to be in this connected subgraph.  So this means that  $P=\mu _kS$ and  so $P$ determines a patterned $2$-sphere.  

We now know that $\mu _k S$ is a patterned surface.    
  All the $2$-pieces, apart from the exceptional one, intersect each $1$-simplex at most once and so  are $3$-sided or $4$-sided.
  The $8$-track shown in Figure 12  is the only possibility for the exceptional $2$-piece.  Thus $S$ has become  an almost normal $2$-sphere and we have a proof of the Poincar\' e Conjecture.
  
      After applying $\mu $,   for $j>k$  each step of the homotopy becomes an isotopy in which a removable pair of adjacent points  is removed.    There is now a new labelling of $W$ in which every point receives a label $j$, where $n \leq j \leq n$.   The pair of points labelled
  $j$ will be a removable pair in $f_j(S^2)$..  In fact the pair of points will be  joined by a returning arc for $j >k$.
  
   \end {proof}
    
\begin{figure}[htbp]
\centering
\begin{tikzpicture}[scale=1.2]

\draw [red,thick] (6.9,.9) arc (45:225:.19) -- cycle ;
\fill [pink] (6.9,.9) arc (45:225:.19) -- cycle ;
\draw [red] (.9,.9) arc (45:225:.2) -- cycle ;

\fill [pink](1.2,0) arc (180: 0:.2)--(1.2,0) ;
\fill [pink]  (4,0) --(3,0)--(3.5,.5)--(4,0);
\fill [pink] (0,0)--(.45,.45)--(.9,0) --(0,0);
\fill [pink]  (2,2)--(2.5,1.5)--(1.5,1.5)--(2,2);
\fill [pink] (2.7,1.3)--(3,1) --(2.7,0)--(2.2,0)--(1.1,1.1)--(1.3,1.3)--(2.7,1.3); ;

\fill [pink]  (10,0) --(9.2,0)--(9.4,.6)--(10,0);
\fill [pink] (6,0)--(6.45,.45)--(6.9,0) --(6,0);
\fill [pink]  (8,2)--(8.5,1.5)--(7.5,1.5)--(8,2);
\fill [pink]  (6.9,.9)--(7.6,0)--(7.2,0)--(6.65,.65)--(6.9,.9);
\fill [pink] (8.2,0)--(8.7,0)--(9,1)--(8.7,1.3)--(7.3,1.3) -- (7.1,1.1)--(8.2,0);

\draw[red]  (4,0) --(3,0)--(3.5,.5)--(4,0);
\draw [red](0,0)--(.45,.45)--(.9,0) --(0,0);
\draw[red] (2,2)--(2.5,1.5)--(1.5,1.5)--(2,2);
\draw[red] (2.7,1.3)--(3,1) --(2.7,0)--(2.2,0)--(1.1,1.1)--(1.3,1.3)--(2.7,1.3); ;

\draw[red] (10,0) --(9.2,0)--(9.4,.6)--(10,0);
\draw[red] (6,0)--(6.45,.45)--(6.9,0) --(6,0);
\draw[red] (8,2)--(8.5,1.5)--(7.5,1.5)--(8,2);
\draw[red] (6.9,.9)--(7.6,0)--(7.2,0)--(6.65,.65)--(6.9,.9);
\draw[red](8.2,0)--(8.7,0)--(9,1)--(8.7,1.3)--(7.3,1.3) -- (7.1,1.1)--(8.2,0);

\draw (0,0) -- (4,0)--(2,2) -- (0,0) ;
\draw (6,0)--(10,0)--(8,2)--(6,0) ;



\draw [red] (1.5,1.5)--(2.5,1.5) ;
\draw [red] (7.5,1.5)--(8.5,1.5) ;
\draw [red] (1.2,0)--(.9,.9);
\draw [red] (1.6,0)--(.6,.6);
\draw [red] (8.7,0)--(9,1) ;

\draw [red] (3,0)--(3.5,.5);
\draw [red] (.9,0)--(.45,.45);
\draw [red] (2.7,0)--(3,1);

\draw [red] (9.4,.6)--(9.2,0);
\draw [red] (6.9,0)--(6.45,.45);

\draw [above] (1.1,1.1) node {$_s$};
\draw [above] (7.1,1.1) node {$_s$};
\draw [below] (2.2,0) node {$_y$};
\draw [below] (7.6,0) node {$_{v_j}$};
\draw [above] (.4,.4) node {$_r$};
\draw [below] (8.2,0) node {$_y$};
\draw [above] (6.4,.4) node {$_r$};
\draw [below] (1.6,0) node {$_{v_j}$};
\draw [below] (1.2,0) node {$_{u_j}$};
\draw [above] (.85,.85) node {$_{u_i}$};
\draw [above] (.55,.55) node {$_{v_i}$};

\draw [above] (6.6,.6) node {$_{u_i}$};
\draw [above] (6.85,.85) node {$_{v_i}$};
\draw [below] (7.2,0) node {$_{u_j}$};
\draw [below] (6.9,0) node {$_x$};
\draw [below] (.9,0) node {$_x$};

\end{tikzpicture}


\end{figure}

\begin{figure}[htbp]
\centering
\begin{tikzpicture}[scale=1.1]
\fill [pink] (0,0)--(.4,0)--(.4,2)--(0,2)--cycle;
\draw (0,0)--(4,0) ;
\draw (0,2)--(4,2);
\draw [red,thick] (.4,0)--(.4,2);
\fill [pink] (3.6,0)--(4,0)--(4,2)--(3.6,2)--cycle;
\fill [pink] (7.7,2.2)--(8,2.2)--(8.1,2)--(7.6,2)--cycle;

\draw [red,thick] (3.6,0)--(3.6,2);
\draw [red,thick] (2,0)--(3,2);
\draw[red,thick] (2.4,0)--(1,2);

\fill [pink] (6,0)--(6.4,0)--(6.4,2)--(6,2)--cycle;
\draw (6,0)--(10,0) ;
\draw (6,2)--(10,2);
\draw [red,thick] (6.4,0)--(6.4,2);
\fill [pink] (9.6,0)--(10,0)--(10,2)--(9.6,2)--cycle;
\fill [pink] (8,0)--(8.4,0)--(8.1,2)--(7.6,2)--cycle;

\draw [red,thick] (9.6,0)--(9.6,2);
\draw [red,thick] (8,0)--(7.6,2);
\draw[red,thick] (8.4,0)--(8.1,2);
\draw [blue,thick] (1,0)--(.8,2);
\draw [blue,thick] (7,0)--(6.8,2);
\draw [blue,thick] (.5,0)--(1.6,2) ;
\draw [blue,thick] (6.5,0)--(7,2);
\draw [green,thick] (1.5,0) -- (3.2,2)  ;
\draw [green, thick] (7.5,0) --(7.3,2);
\draw [brown, thick] (3.2,0)--(1.3,2);
\draw [brown, thick] (9.2,0)--(9,2);
\draw [brown,thick] (2.7, 0)--(2.1,2);
\draw [brown,thick] (8.7,0)--(9.2,2);
\draw [red, thick] (1,2)--(1.2,2.2)--(2.8,2.2)--(3,2);
\draw [red, thick] (7.6,2)--(7.7,2.2)--(8,2.2)--(8.1,2);
\draw [below] (2.4,0) node {$v_j$};
\draw [above] (3,2) node {$u_i$};
\draw [above] (1,2) node {$v_i$};
\draw [below] (8,0) node {$u_j$};
\draw [below] (2,0) node {$u_j$};

\draw [below] (8.4,0) node {$v_j$};
\draw [above] (7.5,2) node {$u_i$};
\draw [above] (8.1,2) node {$v_i$};
\draw [below] (.4,0) node {$x$};
\draw [below] (3.6,0) node {$y$};
\draw [above] (.4,2) node {$r$};
\draw [above] (3.6, 2) node {$s$};

\draw [below] (6.4,0) node {$x$};
\draw [below] (9.6,0) node {$y$};
\draw [above] (6.4,2) node {$r$};
\draw [above] (9.6, 2) node {$s$};

\end{tikzpicture}

\caption {\label 2}

\end{figure}


If the points of $S$ in the interval $rs$ are labelled $(1,2,3,4,5,6,7)$ then $\nu _i$ takes the points to $(1,5,6,2,7,4,3)$.

\end {section}

 \begin{thebibliography}{99}
 
  \bibitem{[DD89]} Warren Dicks and M.J.Dunwoody, {\it Groups acting on graphs}, Cambridge University Press, 1989.  Errata available at https://mat.uab.cat/~dicks/DDerr.html.
 \bibitem{[D85]} M.J.Dunwoody, {\it The accessibility of finitely presented groups}, Invent. Math. {\bf 81} (1985) 449-57.
\bibitem{[D25]} M.J.Dunwoody, {\it A short proof of the Poincar\' e Conjecture.}  ArXiv:2502.15729v4.
\bibitem{[JH76]} J. Hempel, {\it $3$-manifolds}. Ann. of Math. Studies {\bf 86} Princeton University Press.  1986.
 \bibitem {[R97]} J.H.Rubinstein, {\it Polyhedral minimal surfaces, Heegaard splittings and decision problems for 3-dimensional manifolds,} AMS Studies in Advanced Mathematics {\bf 2} (1997).
 
 \bibitem {[T94]} A.Thompson {\it Thin position and the recognition problem for $S^3$,} Math. Research Letters {\bf 1} (1994) 613-630.

\end {thebibliography}

\end {document}